\newtheorem{theorem}{Theorem}
\newtheorem{proposition}[theorem]{Proposition}
\newtheorem{lemma}[theorem]{Lemma}
\theoremstyle{definition}
\newtheorem{definition}[theorem]{Definition}
\theoremstyle{remark}
\newtheorem{remark}[theorem]{Remark}
\def\R{\mathbb{R}}
\def\N{\mathbb{N}}
\def\Z{\mathbb{Z}}
\def\A{\mathcal{A}}
\def\c {\,\,\, ^ {4} \!\!\! \! \sqrt {12} }
\def\t{\Delta}
\def \wt{\widehat \Delta} 	
\def \wwt{\widehat {\widehat  \Delta}}
\begin{document}

\title[]{PROOF OF  THE HONEYCOMB ASYMPTOTICS \\ FOR OPTIMAL CHEEGER CLUSTERS}
\author[]{Dorin Bucur, Ilaria Fragal\`a}
\thanks{}

\address[Dorin Bucur]{Institut Universitaire de France et
Laboratoire de Math\'ematiques CNRS UMR 5127 \\
Universit\'e  Savoie Mont Blanc,  Campus Scientifique \\
73376 Le-Bourget-Du-Lac (France)
}
\email{dorin.bucur@univ-savoie.fr}

\address[Ilaria Fragal\`a]{
Dipartimento di Matematica \\ Politecnico  di Milano \\
Piazza Leonardo da Vinci, 32 \\
20133 Milano (Italy)
}
\email{ilaria.fragala@polimi.it}

\keywords{Optimal partitions, honeycomb, Cheeger constant, Hales hexagonal inequality, inner Cheeger boundary,  Weyl law for higher order Cheeger constant}
\subjclass[2010]{ 52C20, 51M16, 49Q10. }

\begin{abstract}
{ We prove that, in the limit as $k \to+ \infty$,  the hexagonal honeycomb solves the optimal partition problem in which the criterion is minimizing the largest among the Cheeger constants of $k$ mutually disjoint cells in a planar domain.
As a by-product, the same result holds true when the Cheeger constant is replaced by the first Robin eigenvalue of the Laplacian. 
 }
   \end{abstract}
\maketitle

\section{Introduction and statement of the results}

Consider the following optimal partition problem
\begin{equation}\label{f:problemk} M_k(\Omega)= \inf  \Big \{  \max_{j = 1, \dots , k}  h  (\Omega _j )  \ :\  \{\Omega_j\} \in \A _k (\Omega) \Big \}\, , 
\end{equation}
where $\Omega$ is an open bounded subset of $\R^2$ with a Lipschitz boundary, $h (\cdot )$ is the Cheeger constant, and ${\A} _k (\Omega)$ is the class of  
$k$-clusters of $\Omega$, meant as families
of $k$ Borel sets with finite perimeter which are contained into $\Omega$ and have Lebesgue negligible mutual intersections.

Let us recall that the  Cheeger constant of $\Omega$ is defined by
\begin{equation}\label{f:defh}
h (\Omega):=\inf \left \{ \frac{{\rm Per} (E, \R ^2)}{|E|}\ :\ E \hbox{ measurable}\, , \ {E\subseteq   \Omega} \right \}\,,
\end{equation}
where ${\rm Per}(  E, \R ^2)$ denotes the perimeter of $E$ in the sense of De Giorgi.
We refer to the review papers \cite{Leo, Pa} and the numerous references therein for an account of the broad literature about the Cheeger constant. 

Optimal partitions for the Cheeger constant have been firstly studied by Caroccia in \cite{Car17}, where he gives some existence and regularity results for the similar problem
\begin{equation}\label{f:pb22}
\displaystyle m _{k} (\Omega) = \inf  \Big \{ \sum_{j = 1, \dots, k}  h (\Omega _j) \ :\  \{\Omega_j\} \in \A _k (\Omega) \Big \}\, .  
\end{equation}
The main motivation he brings to study problem \eqref{f:pb22} is finding  
some bound for the same problem for the first Dirichlet eigenvalue of the Laplacian, $\lambda _ 1 (\Omega)$.    (Recall indeed that 
$\lambda _ 1 (\Omega)$ is bounded from below by $(h (\Omega) /2) ^ 2 $, as proved by Cheeger himself  in \cite{Ch}.)  
Actually, for problem  \eqref{f:pb22} with $\lambda _1$ in place of the Cheeger constant,  
  a long-standing conjecture by Caffarelli and Lin predicts that, in the limit as $k \to + \infty$, an optimal configuration is given by a packing of regular hexagons \cite{CaffLin}\footnote{The question is commonly  known in the literature  (see {\it e.g.}\ \cite{Car17}) as the Caffarelli-Lin conjecture: in fact  a precise mathematical formulation was given in \cite{CaffLin}, along with the first asymptotic estimates.  Nevertheless, the history of the problem seems to be longer. The first predictive formulation of the conjecture appears in a list of open problems proposed by K. Burdzy in a conference in Matrei in 2005  \href{https://people.kth.se/~laptev/ESF/05/Matrei/problems.html}{\nolinkurl{https://people.kth.se/~laptev/ESF/05/Matrei/problems.html}},  and it is motivated by some  numerical computations originating in older papers \cite{bhim96, cbh05} modeling particle systems. It is also worth to notice that a related version of this conjecture, involving the minimization of the maximum among the first eigenvalues of the cells,  appears to be mathematically formulated  in a paper by B. Helffer,  T. Hoffmann-Ostenhof and S. Terracini, who learned the question from M. van den Berg (see \cite{HeHoTe}).}.

More recently, problems of the kind \eqref{f:problemk} and \eqref{f:pb22} have been studied in \cite{bfvv17}, where it is shown that, 
under the a priori requirement that all the cells of the partitions are convex, 
 the honeycomb conjecture holds true under the form 
\begin{equation}\label{f:asymptotics}
\lim_{k \to + \infty}\frac{|\Omega| ^ {1 /2}} {k ^ {1/2}}  M _k (\Omega)  =
h ( H) \,, \qquad 
\lim_{k \to + \infty}\frac{|\Omega| ^ {1 /2}} {k ^ {3/2}}  m _k (\Omega)  =
h ( H) \,,
\end{equation}
where $H$ denotes the unit area regular hexagon. 

Clearly, the convexity assumption  made on the cells in \cite{bfvv17} is quite stringent.  Nevertheless, as a first approach, it seemed reasonable to attack the problem under this restriction, 
since, also in the case of  perimeter minimizing partitions, the case of convex polygonal cells was much simpler and indeed it was settled a long time before the celebrated result by Hales \cite{Hales}
(see Fejes T\'oth \cite{FT64}). 

Goal of this paper is to prove the honeycomb conjecture for the Cheeger constant in full generality, {\it i.e.} with no convexity constraint on the cells.

We focus our attention on problem \eqref{f:problemk}. 
Our strategy consists in considering first the case when $\Omega$ has a special geometry, that for the sake of simplicity we assume to be that of an equilateral triangle $\mathcal T$ (but other shapes, for instance a rectangle, could do the same job), and obtaining  an inequality for $M _k (\mathcal T)$, with $k$ fixed.   The choice of treating first the case of a simple geometry comes along with our variational approach of the inequality: we work with an optimal partition and take significant advantage from optimality. For that reason we need to have a complete and simple description of this one. The conjecture will follow in full generality, once this special geometric case is proved.  In order to deal with $M _k (\mathcal T)$,  
we introduce the auxiliary problems 
\begin{equation}\label{f:problemp} M _{k,p} (\mathcal T) = \inf  \Big \{  \Big [\sum_{j = 1}^k  h^p  (\Omega _j )\Big ] ^ {1/p}  \ :\  \{\Omega_j\} \in \A _k ({\mathcal T}) \Big \}\,, \qquad p  \geq 1\,,
\end{equation}
 and we set
$$\widetilde M _{k,p} (\mathcal T) := \max _{j=1, \dots, k} h (\Omega ^ p _j) \, ,$$
being $\{\Omega ^ p _1, \dots, \Omega ^ p _k \}$ an optimal cluster for problem \eqref{f:problemp}.
Note that there is an abuse of notation, since $\widetilde M _{k,p} (\mathcal T)$ depends on the choice of $\{\Omega ^ p _1, \dots, \Omega ^ p _k \}$, but we keep this simple notation as the dependence on the optimal cluster is not important for our purposes.

It is easy to see that $\widetilde M _{k, p} (\mathcal T)$ converges to $M _k (\mathcal T)$ in the limit as $p \to + \infty$ (see Section \ref{sec:proof}).

Then we prove that both $\widetilde M _{k, p} (\mathcal T)$ and  $M _k (\mathcal T)$ satisfy the following hexagonal lower bound, being $k$ fixed:

\begin{theorem}\label{t:truehoney}
 Let $\mathcal T$ be an equilateral triangle.   For every $p \geq 1$, there holds
\begin{equation}\label{aux}
\frac{{|\mathcal T}|^\frac 12}{k^\frac 12}\widetilde M _{k, p} ({\mathcal T})\ge h(H).
\end{equation}
Consequently, we have
\begin{equation}\label{tesihoney}
\frac{{|\mathcal T|}^\frac 12}{k^\frac 12}M_k({\mathcal T})\ge h(H).
\end{equation}
\end{theorem}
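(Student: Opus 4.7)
My strategy is to first establish \eqref{aux}, after which \eqref{tesihoney} follows by letting $p\to+\infty$, invoking the convergence $\widetilde M_{k,p}(\mathcal T)\to M_k(\mathcal T)$ announced just above. Fix $p\ge 1$ and let $\{\Omega_1^p,\dots,\Omega_k^p\}$ be an optimal cluster for problem \eqref{f:problemp}. Since the Cheeger constant is monotone non-increasing under set inclusion, after enlarging the cells if necessary one may assume that $\{\Omega_j^p\}$ is actually a partition of $\mathcal T$ (up to negligible sets). For each $j$, pick a Cheeger set $E_j\subseteq\Omega_j^p$, so that $h(\Omega_j^p)=\mathrm{Per}(E_j,\R^2)/|E_j|\le \widetilde M_{k,p}(\mathcal T)$; summing over $j$ yields
\begin{equation}\label{eq:upper}
\sum_{j=1}^k\mathrm{Per}(E_j,\R^2)\ \le\ \widetilde M_{k,p}(\mathcal T)\sum_{j=1}^k|E_j|\ \le\ \widetilde M_{k,p}(\mathcal T)\cdot|\mathcal T|.
\end{equation}

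The heart of the proof is then a matching hexagonal lower bound, for which the target is an estimate of the form
\begin{equation}\label{eq:goal}
\sum_{j=1}^k \mathrm{Per}(E_j,\R^2)\ \geq\ h(H)\sqrt{k\sum_{j=1}^k|E_j|}.
\end{equation}
Combining \eqref{eq:goal} with \eqref{eq:upper} gives $\widetilde M_{k,p}(\mathcal T)\sum_j|E_j|\ge h(H)\sqrt{k\sum_j|E_j|}$, hence $\widetilde M_{k,p}(\mathcal T)^2\sum_j|E_j|\ge h(H)^2 k$, and \eqref{aux} then follows from $\sum_j|E_j|\le|\mathcal T|$. Note that the ordinary isoperimetric inequality already produces $\mathrm{Per}(E_j)\ge 2\sqrt{\pi|E_j|}$ and hence a bound like \eqref{eq:goal} with the weaker ``disk'' constant $2\sqrt\pi$; the role of the Hales hexagonal inequality is precisely to upgrade $2\sqrt\pi$ into the sharper hexagonal value $h(H)$.

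The hard step is exactly to prove \eqref{eq:goal}. Hales' original statement concerns partitions of the plane (or of large regions) into cells of fixed area, while here the $E_j$'s form only a disjoint family: they are in general strict subsets of their parent cells $\Omega_j^p$, they have unequal areas, and they leave real gaps $\Omega_j^p\setminus E_j$. To bridge this gap the plan is to use the ``inner Cheeger boundary'' viewpoint announced in the paper's keywords, decomposing $\partial E_j$ into the piece strictly inside $\Omega_j^p$ --- made of circular arcs of curvature $h(\Omega_j^p)$ by the variational characterisation of Cheeger sets --- and the piece lying on $\partial \Omega_j^p$, which is shared with the neighbouring cells of the partition. This decomposition should allow one to transfer a Hales-type hexagonal perimeter estimate from the network of interfaces of $\{\Omega_j^p\}$ to the Cheeger-set cluster $\{E_j\}$. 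Finally, the choice of $\mathcal T$ as an equilateral triangle should ensure that the boundary contribution in the Hales estimate matches a hexagonally tileable reference configuration and can therefore be absorbed without loss, so that \eqref{eq:goal} holds with the sharp constant $h(H)$.
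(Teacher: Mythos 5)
Your overall skeleton reduces everything to the single inequality
\[
\sum_{j=1}^k \mathrm{Per}(E_j,\R^2)\ \geq\ h(H)\Big(k\sum_{j=1}^k|E_j|\Big)^{1/2},
\]
but this inequality is false, and it fails precisely on the configuration that is supposed to be extremal. Take $k$ unit-area regular hexagons from a hexagonal tiling and let $E_j$ be the Cheeger set of the $j$-th one; then $\mathrm{Per}(E_j)=h(H)\,|E_j|=h(H)\,|E_H|$ with $|E_H|<1$ (the Cheeger set of a polygon omits neighbourhoods of the corners, so it is a strict subset), hence the left-hand side equals $k\,h(H)\,|E_H|$ while the right-hand side equals $h(H)\,k\,|E_H|^{1/2}$, and your inequality would force $|E_H|\geq 1$. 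The defect is structural: by recording only the areas of the Cheeger sets you discard the portions $\Omega_j\setminus E_j$ of the cells, and no Hales-type statement can recover the constant $h(H)$ from the data $(\mathrm{Per}(E_j),|E_j|)$ alone --- the ordinary isoperimetric inequality applied to each $E_j$ separately is essentially all that survives, and, as you note, it only yields $2\sqrt\pi<h(H)$. Your preliminary reduction ``one may assume $\{\Omega_j^p\}$ is a partition of $\mathcal T$'' is also at odds with the actual structure of optimal clusters: the empty chamber $\Omega_0=\mathcal T\setminus\cup_j\overline\Omega_j$ is genuinely nonempty, and its area is not a nuisance to be normalized away but a quantity whose lower bound supplies part of the final constant.

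The paper's proof replaces your target inequality by a different bookkeeping. Each cell of an optimal cluster satisfies the representation formulas $A(\Gamma_{r_j})=\pi r_j^2$ and $|\Omega_j|=r_j\,\mathcal H^1(\Gamma_{r_j})+2\pi r_j^2$, where $\Gamma_{r_j}$ is the inner Cheeger boundary and $r_j=1/h(\Omega_j)$ (Proposition \ref{p:representation}); Hales' inequality is then applied not to the sets $E_j$ but to the rescaled curves $\Gamma_{r_j}/\sqrt{\pi r_*^2}$, which enclose area at least $1$. Making this work requires two further inputs absent from your sketch: (i) control of the truncated deficit and of the number of nodes, obtained by pairing the deficits of the two cells sharing each junction arc and by an Euler-formula count on the canonical graph (Proposition \ref{p:nb}); and (ii) the lower bound $|\Omega_0|\geq(2k-2)|\t_{r_*}|+3|\wwt_{r_*}|$ for the empty chamber (Proposition \ref{p:void}), which is exactly what turns $\sum_j|\Omega_j|\leq|\mathcal T|-|\Omega_0|$ into the sharp identity $h(H)^2=\pi+2\sqrt3+2\sqrt\pi\sqrt[4]{12}$. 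Your closing paragraph points in the right direction (inner Cheeger boundaries plus Hales), but the quantitative core is missing and the inequality it rests on is not true. Only the last step is sound: deducing \eqref{tesihoney} from \eqref{aux} via $M_k(\mathcal T)\leq\widetilde M_{k,p}(\mathcal T)\leq k^{1/p}M_k(\mathcal T)$ is exactly what the paper does.
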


Theorem \ref{t:truehoney} is the keystone of our approach. Hereafter is an attempt of enlightening the main ideas upon which our proof
is based:
\begin{itemize}
\item{} Optimal clusters satisfy an existence, regularity, and structure result, which is essentially a variant of the one valid in the case $p=1$ treated by Caroccia (see Proposition \ref{p:optimal}). 
\item{} As a consequence of the structure result, each cell of an optimal cluster  is Cheeger of itself and enjoys the following key property. If we call ``inner Cheeger boundary'' of a cell the inner parallel set at distance to the boundary  equal to the inverse of the Cheeger constant, then the  oriented area enclosed by such  inner Cheeger boundary turns out to be related to the Cheeger constant of the cell itself by a very simple equation (see Proposition \ref{p:representation}, eq.\eqref{f:area}). It can be read as the transposition of  a well-known relation between the Cheeger set of convex bodies and their inner parallel sets. In turn, this leads to a crucial representation formula 
for the Cheeger constant of an optimal cell in terms of its area and of the length of its inner Cheeger boundary
(see Proposition \ref{p:representation}, eq.\eqref{f:rep}). Such representation formula can be regarded as the initial seed of our proof. 
  \item{} Starting from the representation formula, the optimality of the hexagonal honeycomb comes out by combining a {\it lower bound for the total length of the inner Cheeger boundaries}, with an {\it upper bound for the total area of the cells}. Both are quite delicate. In particular, the former is obtained by applying 
   Hales' hexagonal isoperimetric inequality \cite[Theorem 4]{Hales}, going through the analysis of the collective behaviour of those inner Cheeger boundaries. The latter requires a careful estimate of the area of the empty chamber, which is carried over through some topological and geometrical arguments (see Proposition \ref{p:void}). 
\end{itemize}

 Next,  as a consequence of Theorem \ref{t:truehoney}, we are able to consider the case when the  equilateral triangle is replaced by $k$-triangle, that is a region of triangular shape formed by $k$ hexagons. More precisely,  for  $\displaystyle k=l(l+1)/2$, 
by {\it $k$-triangle}, we mean a connected set  which is obtained as the union of $k$ hexagons lying in a tiling of $\R ^2$ made by a family of copies of a regular hexagon and having the ``rough" shape of an equilateral triangle with $l$ cells on each side (precisely, all the centers of those hexagons lie on the boundary and inside an equilateral triangle).  

We obtain that, for any fixed $k$, the energy of a $k$-triangle (denoted by $\mathcal T _k$), suitably scaled, is precisely that of the regular hexagon:

\begin{theorem}\label{t:truehoney_bis}
Let $\mathcal T_k$ be a $k$-triangle.  
 There holds
\begin{equation}\label{tesihoney_bis}
\frac{{|\mathcal T_k|}^\frac 12}{k^\frac 12}M_k({\mathcal T_k})= h(H).
\end{equation}

\end{theorem}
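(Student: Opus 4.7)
The upper bound $\frac{|\mathcal T_k|^{1/2}}{k^{1/2}}M_k(\mathcal T_k)\leq h(H)$ is immediate from the definition of $M_k$: the $k$ regular hexagons of which $\mathcal T_k$ is the disjoint union form an admissible element of $\A_k(\mathcal T_k)$, each with area $|\mathcal T_k|/k$ and, by the scaling rule $h(\lambda E)=h(E)/\lambda$, Cheeger constant equal to $h(H)(k/|\mathcal T_k|)^{1/2}$; the maximum over the $k$ cells is therefore exactly $h(H)(k/|\mathcal T_k|)^{1/2}$.

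For the reverse inequality my plan is to argue by contradiction via Theorem~\ref{t:truehoney}. Assume that for some $\varepsilon>0$ there existed $\{\Omega_j\}_{j=1}^k\in\A_k(\mathcal T_k)$ with $\max_{j} h(\Omega_j)\leq(1-\varepsilon)\,h(H)(k/|\mathcal T_k|)^{1/2}$. The idea is to replicate this hypothetical super-optimal cluster across many disjoint copies of $\mathcal T_k$ packed inside a large equilateral triangle $\mathcal T^*$. Since $\mathcal T_k$ is a union of regular hexagons, joining $\mathcal T_k$ with a suitably placed mirror copy fills a parallelogram super-tile consisting of $2k$ hexagons (arranged as an $l\times (l+1)$ block of the hexagonal tiling), which in turn tiles the plane by translations. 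Pack inside $\mathcal T^*$ the maximum number $N$ of disjoint such super-tiles, and transplant into each copy of $\mathcal T_k$ (and into each mirror copy) a rigid copy of the hypothetical cluster; this produces an element of $\A_{2Nk}(\mathcal T^*)$ whose maximum Cheeger constant is still bounded by $(1-\varepsilon)\,h(H)(k/|\mathcal T_k|)^{1/2}$. Applying \eqref{tesihoney} of Theorem~\ref{t:truehoney} to $\mathcal T^*$ with $2Nk$ in place of $k$ yields
\[
(1-\varepsilon)\,h(H)\,\frac{k^{1/2}}{|\mathcal T_k|^{1/2}}\ \geq\ M_{2Nk}(\mathcal T^*)\ \geq\ h(H)\,\frac{(2Nk)^{1/2}}{|\mathcal T^*|^{1/2}},
\]
equivalently $2N|\mathcal T_k|/|\mathcal T^*|\leq(1-\varepsilon)^2$. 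But since the super-tile tiles the plane, the packing density $2N|\mathcal T_k|/|\mathcal T^*|$ tends to $1$ as $|\mathcal T^*|\to+\infty$ (the uncovered boundary strip having area of order $|\mathcal T^*|^{1/2}$), yielding the desired contradiction.

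The delicate step is the quantitative packing estimate: one must verify that the boundary strip of $\mathcal T^*$ not covered by complete super-tiles has area negligible with respect to $|\mathcal T^*|$, so that the contradiction triggers as soon as $|\mathcal T^*|$ is taken sufficiently large. The combinatorial claim about the super-tile (that $\mathcal T_k$ and its mirror image fit into an $l\times(l+1)$ block fundamental domain of the hexagonal lattice) is elementary to check case by case; once granted, the remainder of the argument is bookkeeping combined with the direct application of Theorem~\ref{t:truehoney} at the level $2Nk$.
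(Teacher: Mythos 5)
Your proposal is correct and follows essentially the same route as the paper: argue by contradiction, transplant the hypothetical super-optimal cluster into many disjoint copies of $\mathcal T_k$ packed with asymptotically full density inside a large equilateral triangle, and apply inequality \eqref{tesihoney} of Theorem \ref{t:truehoney} to the resulting $(\cdot k)$-cluster. The only cosmetic differences are that the paper shrinks the copies into a fixed unit-area triangle and sends their number $\eta\to+\infty$ rather than blowing up the ambient triangle, and that your mirror-parallelogram super-tile makes explicit the packing-density claim which the paper merely asserts.
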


Finally, relying on Theorem \ref{t:truehoney_bis} and using a  blow-up argument, we obtain  that 
the honeycomb conjecture for the Cheeger constant holds true for every Lipschitz domain $\Omega$ in the following asymptotic form
(which is exactly the same as in \cite{bfvv17}, without the convexity assumption on the cells):

\begin{theorem}\label{t:truehoney_ter}
For every open bounded Lipschitz domain $\Omega$, and every $p \geq 1$, there holds
\begin{equation}\label{tesihoney_ter}
\lim_{k \to + \infty}\frac{|\Omega| ^ {1 /2}} {k ^ {1/2}}  M _{k}  (\Omega)  =
h ( H)
\end{equation}
\end{theorem}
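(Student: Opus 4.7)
The plan is to prove the two-sided limit in Theorem~\ref{t:truehoney_ter} by establishing the $\limsup$ and $\liminf$ separately: the upper bound will come from Theorem~\ref{t:truehoney_bis} via an explicit hexagonal packing construction, and the lower bound from Theorem~\ref{t:truehoney} via a localization (blow-up) of the optimal cluster against an auxiliary equilateral triangulation.

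\emph{Upper bound.} To produce $M_k(\Omega)\le (h(H)+o(1))\sqrt{k/|\Omega|}$, I would fix a large integer $n=l(l+1)/2$ so that the $n$-triangle $\mathcal{T}_n$ exists, and pack $\lfloor k/n\rfloor$ disjoint rescaled copies of $\mathcal{T}_n$ into $\Omega$ with hexagonal cells of area $a\sim|\Omega|/k$. Since equilateral triangles tile $\R^2$, only a boundary layer of area $O(|\partial\Omega|\sqrt{na})$ is left uncovered; this I would absorb into a single annular cell adjacent to $\partial\Omega$, of normal width $w\gtrsim 2\sqrt{a}/h(H)$, so that its Cheeger constant is at most $h(H)/\sqrt{a}$. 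By Theorem~\ref{t:truehoney_bis}, each hexagonal cell has Cheeger constant exactly $h(H)/\sqrt{a}$. Solving the area balance $(k-1)a+A_{\text{annular}}=|\Omega|$ gives $\sqrt{a}=\sqrt{|\Omega|/k}\,(1+o(1))$, and sending first $k\to\infty$ then $n\to\infty$ yields the $\limsup$.

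\emph{Lower bound.} Let $\{\Omega_j^{(k)}\}$ be an optimal $k$-cluster with $M:=M_k(\Omega)$. Tile $\R^2$ with equilateral triangles of side $s=s(k)$ and let $\{T_i\}$ be those contained in $\Omega$; set $k_i:=\#\{j:\Omega_j^{(k)}\subseteq T_i\}$. Since $\{\Omega_j^{(k)}:\Omega_j^{(k)}\subseteq T_i\}$ is a $k_i$-cluster of $T_i$, Theorem~\ref{t:truehoney} gives $k_i\le M^2|T_i|/h(H)^2$ and hence $\sum_i k_i\le M^2|\Omega|/h(H)^2$. Setting $N_b:=k-\sum_i k_i$, it remains to show $N_b=o(k)$ as $k\to\infty$, which then yields $M^2\ge h(H)^2(1-o(1))k/|\Omega|$. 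The cells counted in $N_b$ either lie in the boundary strip $\Omega\setminus\bigcup_i T_i$ (of area $O(s|\partial\Omega|)$) or straddle a triangulation edge (total length $O(|\Omega|/s)$). By the structure and inner-Cheeger-boundary results (Propositions~\ref{p:optimal}--\ref{p:representation}), each $\Omega_j^{(k)}$ is Cheeger of itself and contains a disk of radius $\ge 1/h(\Omega_j^{(k)})\ge 1/M$; these disks are pairwise disjoint of area $\pi/M^2\sim|\Omega|/k$. A packing estimate then bounds the number of cells in the strip by $O(sk|\partial\Omega|/|\Omega|)$ and the number straddling the grid by $O(|\Omega|M/s)=O(\sqrt{k|\Omega|}/s)$. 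Choosing $s(k)=(|\Omega|/k)^{1/4}$ makes both of these $O(k^{3/4})=o(k)$.

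\emph{Main obstacle.} The central difficulty is the $N_b=o(k)$ estimate: a priori, cells could be arbitrarily thin or ramified and overlap many triangles, defeating the localization. The decisive input is the uniform inradius lower bound $\ge 1/M$ for each optimal cell, extracted from Propositions~\ref{p:optimal}--\ref{p:representation}; it converts the narrow boundary-strip and grid-proximity regions into spaces hosting only $o(k)$ disjoint inscribed disks at the natural cell scale $\sqrt{|\Omega|/k}$. Without such structural regularity, the tiling-based blow-up would not close, and the lower bound would fall short of the sharp constant $h(H)$.
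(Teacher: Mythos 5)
Your route is genuinely different from the paper's. The paper first upgrades Theorem \ref{t:truehoney_bis} from $k$-triangles to arbitrary ``$k$-cells'' (connected unions of hexagons from a tiling) via an embedding/contradiction argument, and then gets both bounds for general $\Omega$ by approximating $\Omega$ from inside and outside by $k$-cells, using only homogeneity and monotonicity of $M_k$ under inclusion. You instead localize an optimal cluster against an auxiliary equilateral triangulation and apply Theorem \ref{t:truehoney} triangle by triangle; if it worked, this would be attractive because your lower bound bypasses Theorem \ref{t:truehoney_bis} entirely. Your upper bound is essentially fine, though over-engineered: since a $k$-cluster need not cover $\Omega$, you can simply pack $k$ disjoint regular hexagons of area $(|\Omega|/k)(1-o(1))$ into $\Omega$ and use the scaling $h(tH)=h(H)/t$; neither the annular cell nor Theorem \ref{t:truehoney_bis} is needed there.

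The lower bound, however, has a genuine gap, concentrated exactly where you locate the ``main obstacle.'' First, the structural inputs you invoke are not available: Propositions \ref{p:optimal} and \ref{p:representation} concern optimal clusters of the auxiliary $\ell^p$-problem \eqref{f:problemp} on the equilateral triangle $\mathcal T$, not (near-)optimal clusters of the $L^\infty$-problem $M_k(\Omega)$ on a general Lipschitz domain, for which the paper establishes neither existence nor regularity. You can arrange that each cell is an indecomposable self-Cheeger set (replace each cell by a component of its Cheeger set), and then the isoperimetric inequality gives the elementary area bound $|\Omega_j|\ge 4\pi/M^2$; but the inradius bound $\ge 1/M$ is a nontrivial fact about planar Cheeger sets that this paper neither states nor proves for general sets, so it would have to be proved or properly cited. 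Second, and more seriously, even granting the inradius bound, your packing estimate for the straddling cells does not close: a cell failing to be contained in any $T_i$ need only \emph{touch} the $1$-skeleton of the triangulation, while its inscribed disk (and the bulk of its area) may sit deep inside a single triangle --- think of a long stadium-like self-Cheeger set with $h\approx M$ whose tip crosses an edge. Disjointness of the inscribed disks then bounds the total number of cells, not the number meeting a thin neighborhood of the grid, so the claimed $O(\sqrt{k|\Omega|}/s)$ count is unjustified. The step can be repaired --- e.g.\ split the straddling cells by diameter, bound those of diameter $\le D$ by the area of the $D$-neighborhood of the grid divided by $4\pi/M^2$, and those of diameter $>D$ by the bound $|\Omega_j|\ge 2D/M$ coming from $P(\Omega_j)\ge 2\,\mathrm{diam}(\Omega_j)$ and self-Cheegerness, then optimize $D$ and $s$ --- but as written the $N_b=o(k)$ estimate, which is the heart of your lower bound, is not proved.
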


\begin{remark}[\bf Asymptotic behaviour for partitions of the Robin-Laplacian eigenvalues]

It is worth noticing that, as a consequence of the above result and Corollary 3 (i) together with Remark 15 in \cite{bf17R}, 
the same result as Theorem \ref{t:truehoney_ter} holds true if in the definition of $M _k (\Omega)$ the Cheeger constant $h(\Omega_j)$ is replaced by the first 
eigenvalue of the Laplacian under Robin boundary  conditions, $\lambda_1(\Omega_j, \beta)$.  Precisely, given $\beta >0$ (fixed), $\lambda_1(\Omega_j, \beta)$ is the lowest positive number for which the equation
$$
\begin{cases}
- \Delta u = \lambda_1(\Omega_j, \beta) u & \text{ in } \Omega_j
\\
\frac{\partial u }{\partial \nu} + \beta u = 0 & \text{ on } \partial \Omega_j\,.
\end{cases}
$$
has a non trivial solution.
\end{remark}

\begin{remark}[\bf Weyl asymptotic for the $k$-th Cheeger constant]
The quantity $M_k(\Omega)$ is also called the $k$-th Cheeger constant of $\Omega$  (see the recent paper \cite{PaBo} and references therein), 
and an equivalent of this notion is intensively studied on graphs, for clustering purposes (see {\it e.g.}\ \cite{Trev}). 

Loosely speaking, for the 1-Laplacian operator, the $k$-th Cheeger constant can be interpreted as a counterpart of  the $k$-th eigenvalue. 
In this perspective, Theorem \ref{t:truehoney_ter} can also be interpreted as an asymptotic formula of Weyl type \cite{Ivrii} for the $k$-th Cheeger constant, since it can be rephrased as 
$$ M_k(\Omega) = \frac{k^\frac 12}{|\Omega|^\frac 12}(\sqrt{\pi}+ \sqrt[4]{12}) + o(k^\frac 12).$$ 

\end{remark}

\medskip

The plan of the paper is the following. In Section \ref{sec:structure}, we establish all the preparatory results which concern the properties of optimal clusters; the results of this section relay on the work of Caroccia \cite{Car17}. Next we give the intermediate results of topological nature in Section \ref{sec:top}, and the key representation result involving the inner Cheeger boundary in Section \ref{sec:bdry}. 
 The proofs of Theorems \ref{t:truehoney},   \ref{t:truehoney_bis},  and  \ref{t:truehoney_ter},  
are  then given in Section \ref{sec:proof}.  Finally in Section \ref{sec:app},  we collect some auxiliary geometrical lemmas needed for the estimate of the area of the empty chamber.

\section {About optimal clusters}\label{sec:structure}

This section is devoted to the study of optimal clusters for problem \eqref{f:problemp}, in case $\Omega$ is an equilateral triangle $\mathcal T$: 
in Section \ref{subsec:structure} we give a structure result along the same line of the one proved by Caroccia for $p=1$; in Section \ref{sec:important} we fix some important consequences of the structure result; in Section \ref{sec:graph} we associate with an optimal cluster a planar graph,
which will be used as fundamental tool to establish the topological results stated in the next section.

\subsection{A structure result for optimal clusters}\label{subsec:structure}

\begin{definition}\label{defA}  We denote by $\mathcal A$ the family of Jordan domains $\Omega$ of class $C ^ {1}$ contained into $\mathcal T$ such that $\Omega$ is Cheeger set of itself, and the  (positively oriented) boundary $\partial \Omega$
 is the union of an even number of  nontrivial arcs alternating
 a {\it free arc} and a {\it junction arc}. A free arc is an arc of circle of algebraic curvature $h (\Omega)$ with at least one  endpoint in the interior of $\mathcal T$. A junction arc may be either an {\it inner junction arc} or a {\it border junction arc}. 
 An inner junction arc is an arc of circle of algebraic curvature 
 strictly less than $h (\Omega)$
 (possibly $0$),  with both the endpoints  in the interior of $\mathcal T$. An outer junction arc is a curve,  with both the endpoints on $\partial \mathcal T$,  which is union of segments lying in $\partial \mathcal T$ and arcs of circle of curvature $h (\Omega)$. 
\end{definition}

\begin{remark}\label{r:palla}
We point out that $\mathcal A$  does not contain any ball, firstly  because the number of circular arcs must be even, and also because, if $B$ is a ball of radius $R$, it holds  $h (B) = \frac{2}{R}$, so that the curvature is not equal to $h (B)$. For a similar reason, $\mathcal A$ does not contain any stadium-domain (that is, the convex envelope of two balls).   As a further example it is easy to check that, among all convex domains obtained from a square by ``rounding off'' the corners with four circular arcs, only the Cheeger set of the square lies in the class $\A$. 
\end{remark}
\begin{proposition}[properties of an optimal cluster] \label{p:optimal}  
For every fixed $p \geq 1$:  

\smallskip
\begin{itemize}
\item[(i)]  problem \eqref{f:problemp} admits a solution in which each cell is Cheeger of itself, hereafter denoted by $\{ \Omega _1, \dots , \Omega _k \}$;
 
 \smallskip
 \item[(ii)] each cell $\Omega _j$ is a simply connected set of class $C^1$;

\smallskip
\item[(iii)] each cell $\Omega_j$ belongs to the family
$\mathcal A$ introduced in Definition  \ref{defA};  moreover:

\begin{itemize}

\item[-- ]   any inner junction arc for $ \Omega_j$ is also an inner junction arc for another set $\Omega_l$, and its curvature, seen from $\Omega _j$,  is given by 
\begin{equation}\label{curv} K_{j,l} = \displaystyle
\frac
{ \frac{h ^ p ( \Omega_j)} {|\Omega_j|} 
-\frac{h ^ p (\Omega_l)} {|\Omega_l|}
}
{ \frac{h ^ {p-1} ( \Omega_j)} {|\Omega_j|} 
 +  \frac{h ^ {p-1} ( \Omega_l)} {|\Omega_l|}
}\,;
\end{equation} 

\item[--]  any free arc for $\Omega_j$ can intersect $\cup _{l \neq j} \partial \Omega_l$ on at most a finite number of points; moreover, the opening angle of any portion of a free arc which does not contain intersection points with $\cup _{l\neq j} \partial \Omega _l$ is strictly less than $\pi$.
\end{itemize}
\end{itemize} 
\end{proposition}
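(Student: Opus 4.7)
The plan is to follow Caroccia's strategy developed in \cite{Car17} for $p=1$, adapting each step to the general exponent $p\ge 1$.

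For (i) I would apply the direct method. Starting from a minimizing sequence $\{\Omega_1^n,\dots,\Omega_k^n\}$ for \eqref{f:problemp}, I would first replace each $\Omega_j^n$ by a Cheeger subset $E_j^n\subseteq\Omega_j^n$: since $h(E_j^n)=h(\Omega_j^n)$ and $E_j^n$ is Cheeger of itself, this operation preserves the cluster property and does not increase the functional. The new sequence has uniformly bounded perimeters, so by $L^1$-compactness of sets of finite perimeter contained in $\mathcal T$ a subsequence converges to a limit cluster $\{\Omega_1,\dots,\Omega_k\}$. Along this sequence $h^p(E_j^n)=({\rm Per}(E_j^n)/|E_j^n|)^p$, and lower semicontinuity of the perimeter together with $L^1$ convergence of the cells produces a minimizer in which every cell is Cheeger of itself.

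For (ii), the Cheeger-of-itself property combined with the packing constraint makes each optimal cell an almost-minimizer of the perimeter at fixed area, so by the standard De Giorgi--Tamanini theory $\partial\Omega_j$ is $C^{1,\alpha}$ away from a finite singular set of triple/border points, yielding the $C^1$ regularity. Simple connectedness follows from a filling-the-holes argument: a hole whose interior is empty can be filled, strictly decreasing the perimeter and enlarging the area, hence strictly decreasing $h(\Omega_j)$; a hole occupied by another cell $\Omega_l$ is excluded by exchanging the roles of the two sets and repeating the argument.

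For the structure in (iii), I would derive \eqref{curv} as the Euler--Lagrange condition at a shared interface. Perturbing the interface between $\Omega_j$ and $\Omega_l$ with velocity $\phi\,\nu_j$ compactly supported away from triple points, the standard first variation formulas give
\[ \delta\,{\rm Per}(\Omega_j)=\int K\phi\,d\mathcal H^1=\delta\,{\rm Per}(\Omega_l),\qquad \delta|\Omega_j|=-\delta|\Omega_l|=\int\phi\,d\mathcal H^1, \]
where $K$ is the signed curvature viewed from $\Omega_j$. Substituting into $\sum_i\delta h^p(\Omega_i)=0$ and requiring this to hold for every admissible $\phi$, the pointwise Euler--Lagrange equation rearranges to exactly \eqref{curv}; a direct algebraic check, using $h(\Omega_j)\cdot h^{p-1}(\Omega_j)/|\Omega_j|=h^p(\Omega_j)/|\Omega_j|$, shows $K<h(\Omega_j)$, so the interface qualifies as an inner junction arc. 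The same calculation on a free arc (where only $\Omega_j$ is perturbed) gives $K=h(\Omega_j)$, identifying free arcs as circular arcs of curvature $h(\Omega_j)$. Border junction arcs are treated by the same variational argument with a unilateral obstacle on $\partial\mathcal T$, which explains the alternation of boundary segments and arcs of curvature $h(\Omega_j)$.

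The main obstacle I anticipate is the last bullet of (iii): that a free arc meets $\bigcup_{l\ne j}\partial\Omega_l$ at only finitely many points, and that each sub-arc between consecutive intersections subtends an angle strictly less than $\pi$. Finiteness follows from the fact that two distinct circular arcs (of possibly different curvatures) intersect in at most two points, together with the finiteness of the cells. The angular bound is more delicate: a free sub-arc of opening $\ge\pi$ encloses a half-disc of radius $1/h(\Omega_j)$, and one has to build a local competitor by cutting off a suitable lens-shaped region and redistributing its area among the adjacent cells so that the cluster property is preserved while $\sum_i h^p(\Omega_i)$ strictly decreases. The exponent $p$ only rescales the weights entering the comparison and should not alter the essence of Caroccia's construction for $p=1$.
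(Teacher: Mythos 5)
Your overall route --- direct method for existence, quasi-minimality and elliptic regularity for smoothness, and a first-variation computation at the interfaces --- is exactly the paper's (which in turn leans on Caroccia's treatment of $p=1$), and your Euler--Lagrange derivation of \eqref{curv} is correct: writing $\delta h(\Omega_j)=\frac{1}{|\Omega_j|}\bigl(\int K\phi - h(\Omega_j)\int\phi\bigr)$ and $\delta h(\Omega_l)=\frac{1}{|\Omega_l|}\bigl(\int K\phi + h(\Omega_l)\int\phi\bigr)$ and imposing $\sum_i p\,h^{p-1}(\Omega_i)\,\delta h(\Omega_i)=0$ gives precisely \eqref{curv}, and the sign argument for $K<h(\Omega_j)$ is sound. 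Two steps, however, are genuinely incomplete. First, simple connectedness: your filling argument disposes of empty holes, but ``exchanging the roles of the two sets'' does nothing when a hole of $\Omega_j$ contains another cell $\Omega_l$ --- such an $\Omega_l$ can be a topological disc and need not have any hole of its own, so there is no symmetric configuration to exploit, and filling the hole would overlap $\Omega_l$ and destroy the partition. The paper avoids this by translating the hole \emph{together with its contents} inside $\Omega_j$ until it touches $\partial\Omega_j$: the energy is unchanged, so the new cluster is still optimal, but the contact point violates the $C^{1,\alpha}$ regularity that every optimal cluster must enjoy. Some argument of this kind is needed for occupied holes.

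Second, you never establish that $\partial\Omega_j$ decomposes into \emph{finitely many} circular arcs and that there are no triple points (a point of $\overline{\mathcal T}$ lies on at most two of the $\partial\Omega_j$); in the paper these come from Caroccia's Propositions 5.4 and 5.5 and are indispensable both for membership in the class $\mathcal A$ and for the graph construction used later. This omission makes your finiteness argument for the last bullet circular: you deduce that a free arc meets $\bigcup_{l\neq j}\partial\Omega_l$ in finitely many points from the fact that the other boundaries are finite unions of circular arcs, which is part of what must be proved. Your regularity statement ``$C^{1,\alpha}$ away from a finite singular set of triple points'' concedes too much: the proposition needs full $C^1$ regularity inside $\mathcal T$, hence the absence of triple points must be argued (via the no-triple-junction lemma for Cheeger clusters), not assumed away. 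The sketch of the opening-angle bound via a cut-and-redistribute competitor is a reasonable plan and matches the spirit of the reference, so I would not count that as a gap, but the two points above need to be filled in.
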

\begin{proof}
For $p=1$ the existence, regularity and structure of optimal clusters of problem  \eqref{f:problemp} have been discussed in \cite{Car17}. For $p>1$, the arguments are precisely the same, without any significant difference. For the convenience of the reader, we highlight the main steps, and refer to \cite{Car17} for details. 

\medskip
(i) We replace the original problem \eqref{f:problemp} by the following one 
\begin{equation}\label{f:problemp-d}\inf  \Big \{  \Big [\sum_{j = 1}^k  \Big (\frac {\mathcal H ^ 1(\partial ^*\Omega _j)}{|\Omega_j|} \Big )^ p\Big ] ^ {1/p}  \ :\  \{\Omega_j\} \in \A _k ({\mathcal T}) \Big \}\, .
\end{equation}

We trivially get  an upper bound  for the value 
of the above infimum by referring to some configuration (e.g. $k$ disjoint balls). 
As a consequence, there exists a constant $M>0$ such that, for any minimizing sequence $(\Omega^n _1, \dots, \Omega^n_k)$, 
it holds 
$$\sum_{j = 1}^k  \Big (\frac {\mathcal H ^ 1(\partial ^*\Omega^n _j)}{|\Omega^n _j|} \Big )^ p\le M^p.$$
Combined with the isoperimetric inequality, this implies that the measures $|\Omega^n_j|$ remain bounded from below. 
As well, we get the upper bound
$$\mathcal H ^ 1(\partial ^*\Omega^n _j)\le M |\mathcal T |  .$$ 
Consequently, the existence of optimal clusters for problem \eqref{f:problemp-d} follows by standard compactness/lower semicontinuity arguments in $BV$. Each set of an optimal configuration is self Cheeger, otherwise this would contradict optimality. Moreover, every solution to problem \eqref{f:problemp-d}  is also solution to the original problem \eqref{f:problemp}. 
Let us denote such a solution by $(\Omega_1, \dots, \Omega_k)$.

\medskip
(ii) All sets $\Omega_j$ of the optimal cluster obtained in statement (i) are (locally, inside ${\mathcal T}$) quasi-minimizers for the perimeter, and hence they are equivalent to open sets with boundary having $C^{1,\alpha}$ regularity (inside ${\mathcal T}$), with any $\alpha \in (0, \frac 12)$.

  For $p=1$ the quasi minimality argument is given in \cite[Theorem 3.6]{Car17}, but the proof does not depend on $p$. Roughly speaking, together with the minimality in  \eqref{f:problemp-d}, the key points are that each set $\Omega_j$ is self Cheeger and has an algebraic curvature (in a distributional sense) not larger than a constant (in our case $M$). So, we know that each set $\Omega_j$ is equivalent to an open set with smooth boundary. A priori, the set $\Omega_j$ may not be connected. In case that $\Omega_j$ is not connected, two connected components have necessarily to lie at positive distance, and we can choose one of them and replace $\Omega_j$ with this component. The energy  in  \eqref{f:problemp-d}  does not change. So we know that all sets $\Omega_j$ are connected. As a consequence, the vertices of ${\mathcal T}$ do not belong to any of the boundaries $\partial \Omega_j$, since cutting out by a line a piece of $ \Omega_j$ near the corner, would strictly decrease its Cheeger constant.

Moreover, each set is simply connected. Indeed, if a set $\Omega_j$ is not simply connected, we analyze one hole (which is smooth) and translate it inside $\Omega_j$ up to a new contact point with $\partial \Omega_j$. This new set is also optimal, contradicting the regularity.

\medskip
(iii) We analyze now the structure of the boundary. Following the same arguments as for $p=1$ in \cite[Proposition 5.4 and Proposition 5.5]{Car17}, there are no triple points (meaning that a point of $\overline {\mathcal T}$ may belong to at most two boundaries $\partial \Omega_j$, $\partial \Omega_l$), and the boundary of $\Omega_j$ is a finite union of arcs of circle. Moreover, looking at a piece of arc of circle which is common to $\partial \Omega_j$ and $\partial \Omega_l$, one can write optimality conditions, which lead precisely to the expression of the algebraic curvature (seen from $\Omega_j$) given by \eqref{curv}. 
We see from \eqref{curv} that $K_{j,l}$ is strictly less than $h(\Omega_j)$. If we look now at a piece of arc of circle from $\partial \Omega_j$ lying in a neighborhood of a point which has a positive distance from $\cup _{l\neq j} \partial \Omega _l$, we get from optimality that the curvature has to be equal to $h(\Omega_j)$. As a consequence of the $C^1$-regularity, two such pieces of arc from $\partial \Omega _j$ meeting at a point which belongs to  $\cup _{l\neq j} \partial \Omega _l$ have to be part of  a unique arc of curvature $h(\Omega_j)$. In this way, we identify clearly the boundary of $\Omega_j$ as an ordered union of free arcs of circle of algebraic curvature equal to $h(\Omega_j)$ alternating with junction arcs  which may be inner or border ones.  \end{proof}

\begin{remark}\label{r:ambrosio}
We point out that part of the information on the properties of an optimal cluster given in Proposition \ref{p:optimal} could be obtained in a direct way by applying 
to each cell a structure result by Ambrosio, Caselles, Masnou and  Morel
for measurable sets with finite perimeter in two dimensions, which are indecomposable in the sense of geometric measure theory (see \cite{ACMM01}).  
\end{remark} 

\subsection{Consequences of the structure result.}\label{sec:important}

As an outcome of Proposition \ref{p:optimal}, the structure of an optimal cluster for problem \eqref{f:problemp} 
is quite rigid. For later use, it is important to fix in particular the following facts.

\medskip

-- {\it Connected components of the empty chamber}. By empty chamber, we mean the set $\Omega _0:= \mathcal T \setminus \cup _{j=1} ^m \overline \Omega _j$. 
Every connected component $c (\Omega _0)$ of the empty chamber is a Jordan domain.  If  $c (\Omega _0)$   has a positive distance from $ \partial \mathcal T$, its boundary is  a union of free arcs. If  $c (\Omega _0)$ touches $\partial \mathcal T$, two possibilities may occur: either $\partial c (\Omega _0)$   is  union of some free arcs and some segments on $\partial \mathcal T$, or  $\partial c (\Omega _0)$  is  union of two segments lying on consecutive sides of $\partial \mathcal T$ and a piece of a border junction arc (and this may occur only around the corners).   

We point out in particular that, if we endow  $\partial c (\Omega _0)$  with a positive orientation, all the  arcs of circle  have negative curvatures $-h(\Omega_j)$, being $\Omega_j$ the neighbouring cells. Thus, as a consequence of the sign of the curvatures, 
 $\partial c (\Omega _0)$ contains at least three arcs (meant as arcs of circle or segments).

\medskip
-- {\it Cells sharing several  inner  junction arcs}.  
Two cells $\Omega_j, \Omega_l$ may share several  inner  junction arcs. In this case, two consecutive  inner junction arcs need to enclose another cell. More precisely assume that, following the orientation of $\partial \Omega _j$, we find  two consecutive  inner  junction arcs
$\gamma _1$ and $\gamma_2$, and let us denote by  $P$ the final point  of $\gamma _1$ and by $Q$ the initial  point of  $\gamma _2$. Consider the curve $\gamma$ starting on $P$, following $\partial \Omega_1$ up to $Q$ and then following $\partial \Omega_2$ up to $P$ (still in the positive sense). Then, $\gamma$ has necessarily to enclose another cell, different from $\Omega _j, \Omega _l$. Indeed, $\gamma$ does not contain any other  inner  junction arc between $\Omega_1$ and $\Omega_2$, because we have chosen two consecutive  inner  junction arcs. Thus, the only possibility for $\gamma$ not to enclose another cell would be that on $\partial \Omega_1$ the curve from $P$ to $Q$  is a free arc, and on $\partial \Omega_2$ the curve from $Q$ to $P$ is also a free arc. This is not possible, since the curvature of both free arcs, seen from $\Omega _j$ and $\Omega _l$,  are positive. 

\medskip
--  {\it Cells sharing several border junction arcs with $\partial {\mathcal T}$.} 
A cell $\Omega _j$ may share several border junction arcs with $\partial {\mathcal T}$.
However, in the alternation of free and junction arcs, two border junction arcs cannot be consecutive. 
Indeed, between two border junction arcs, there is a free arc having one endpoint in the interior of $\mathcal T$, so that an inner junction arc starts at such endpoint. 
Notice also that a border junction arc  may contain different segments lying in  $\partial \mathcal T$; if this is the case, due 
to the sign of the curvature of the free arcs, these segments cannot lie on the same side of $\mathcal T$, but belong necessarily to distinct sides of $\mathcal T$;  consequently, they can be either $2$, or  at most $3$.

\subsection{Construction of the canonical graph associated with an optimal cluster.}\label{sec:graph}

Thanks to the properties of an optimal cluster for problem \eqref{f:problemp} described so far, we are ready to associate with it a planar graph. 

\begin{definition}\label{def:graph} We call {\it  canonical graph} associated with an optimal cluster for problem \eqref{f:problemp} the planar graph having the following vertices and edges:
 
-- {\it Vertices}: To each cell $\Omega_j$, $j=1, \dots, k$ we associate a vertex $X_j$. 
Also to the set $\R^2 \setminus \overline {\mathcal T}$ we associate a vertex, denoted $X_0$. 
We have thus $k+1$ vertices. To draw a representation of the graph in the plane, the vertices can be chosen as arbitrary points in the interior of $\Omega _j$ and 
$\R^2 \setminus \overline {\mathcal T}$ respectively. 

 -- {\it Edges}.   
We distinguish the families 
 $\mathcal E_{in}$  and $\mathcal E _{out}$ of  {\it inner} and {\it outer} edges, namely 
edges of the graph which join  two distinct  vertices $X_j$, $X_l$  ($j, l \in \{1, \dots, k\}$), or 
a vertex $X_j$ with  $X_0$, respectively. The family $\mathcal E _{in}$ is constructed as follows: 
 to every couple ($\Omega_j, \Omega_l$) which share an  inner  junction arc, we associate an edge by joining $X_j$ to $X_l$
through such arc. 
 The family  $\mathcal E _{out }$ is constructed as follows:  
 to every cell $\Omega_j$ having a border junction arc on $\partial {\mathcal T}$, we associate an edge by joining $X_j$ to $X_{0}$ 
 through such arc. 
\end{definition}
 
\begin{remark}\label{r:faces}  Each face of 
the canonical graph associated with an optimal cluster for problem \eqref{f:problemp} 
has at least $3$ edges. 
To prove this claim, it's enough to observe that a face of the graph can be delimited neither by just two inner edges nor by just two outer edges. 

Indeed, two cells may share several inner junction arcs, so that two vertices $X_j$, $X_l$ may be connected by multiple inner edges;
however, we know from Section \ref{sec:important}
that two consecutive inner junction arcs need to enclose another cell, and therefore
 no face of the graph can be delimited by just two inner edges. 

Likewise, a cell may share several border junction arcs with $\partial \mathcal T$, but
we know from Section \ref{sec:important} that they cannot be consecutive, and therefore
 no face of the graph can be delimited by just two outer edges. 
\end{remark}

\section{Intermediate topological results}\label{sec:top}

 In this section we give two results needed for the proof of Theorem \ref{t:truehoney}, both obtained via the analysis of the canonical graph associated with an optimal cluster: in Proposition \ref{p:nb} we give an upper bound for the average of the number of junction arcs, and in Proposition \ref{p:void} we provide an estimate from below for the area of the empty chamber.

\begin{proposition}[average of number of junction arcs]\label{p:nb}
For a fixed $p\geq 1$, let $\{\Omega _1, \dots, \Omega _k\}$ be an optimal cluster for problem \eqref{f:problemp} 
which satisfies the properties stated in Proposition \ref{p:optimal}.  Let $ 2\Lambda _j  $, $j = 1, \dots, k$ be the number of oriented arcs which compose
$\partial \Omega _j$, so that $\Lambda_j$ is the number of junction arcs in $\partial \Omega _j$.   Let  $E _{out}$ be the cardinality of the family  $\mathcal E _{out}$ of outer edges of the canonical graph associated with the cluster $\{\Omega _1, \dots, \Omega _k\}$ according to Definition \ref{def:graph}. 
Then the following inequality holds: 
$$\sum_{j=1}^k  \Lambda_j  + E_{out} +6  \le 6k  .$$
\end{proposition}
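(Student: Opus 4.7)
The plan is to recast the claimed inequality as the classical planar bound $E \le 3V-6$ applied to the canonical graph of Definition \ref{def:graph}, with $V=k+1$ and $E=E_{in}+E_{out}$; the Euler/face-counting argument then yields the result.

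First I would use the structure of the cluster to convert the left-hand side into an edge count. By Proposition \ref{p:optimal}(iii), an inner junction arc of $\Omega_j$ is simultaneously an inner junction arc of a unique other cell $\Omega_l$, while a border junction arc belongs to a single cell. Consequently, summing the number $\Lambda_j$ of junction arcs per cell gives
\[
\sum_{j=1}^{k} \Lambda_j \;=\; 2 E_{in} + E_{out},
\]
and hence
\[
\sum_{j=1}^{k} \Lambda_j + E_{out} \;=\; 2(E_{in}+E_{out}) \;=\; 2E,
\]
so that the inequality to prove becomes equivalent to $E \le 3k-3 = 3V-6$.

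Next I would observe that the canonical graph is realized as a planar multigraph in the obvious way: the simple connectedness of each $\Omega_j$ (Proposition \ref{p:optimal}(ii)) and of the complement $\R^2\setminus\overline{\mathcal T}$ allows each edge to be drawn through its corresponding junction arc without producing crossings. The essential structural input is then Remark \ref{r:faces}, which asserts that no face of the resulting embedding is bounded by fewer than three edges: two consecutive inner junction arcs between the same pair of cells must enclose a third cell, and two border junction arcs of the same cell along $\partial \mathcal T$ cannot be consecutive. Since the graph also has no loops, the standard double-count of edge-sides yields $2E \ge 3 F$.

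Combining this with Euler's formula $V - E + F = 1 + C$, valid for any planar (multi)graph with $C \ge 1$ connected components, and substituting $F = 1+C+E-V$ into $2E \ge 3F$, produces
\[
E \;\le\; 3V - 3 - 3C \;\le\; 3V - 6 \;=\; 3k - 3,
\]
which is exactly the inequality to be proven. The only content genuinely specific to the problem is the face-size bound encoded in Remark \ref{r:faces}; everything else is a routine application of planarity and Euler, so no significant obstacle is expected beyond what has already been established in Section \ref{sec:structure}.
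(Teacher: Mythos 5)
Your proposal is correct and follows essentially the same route as the paper: the identity $\sum_j \Lambda_j = 2E_{in}+E_{out}$ turns the claim into $E\le 3V-6$ for the canonical graph, which follows from Remark \ref{r:faces} (each face has at least three edges, hence $2E\ge 3F$) together with Euler's formula. Your only variation is to allow for a possibly disconnected graph via $V-E+F=1+C$, a harmless refinement since $C\ge 1$ only strengthens the bound, whereas the paper applies Euler's formula directly with $V-E+F=2$.
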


\begin{proof}

We denote by $V= k+1$ the number of vertices, $E$  the number of edges, and $F$ the number of sides
in the canonical graph associated with the optimal cluster $\{\Omega _1, \dots, \Omega _k\}$. 

Since every edge borders $2$ faces, and each face has at least $3$ edges (by Remark \ref{r:faces}), there holds 
$$2E\ge 3F\,.$$ Then, using the Euler formula 
$$V-E+F=2,$$
we obtain
$$3k-3 \ge E.$$
 Setting $E_{in}$  and $E _{out}$  the cardinalities of the families $\mathcal E_{in}$  and $\mathcal E _{out}$ of inner and outer edges according to Definition \ref{def:graph},  we get
$3k-3 \ge E_{in} + E_{out}$,
or equivalently
\begin{equation}\label{precount}
6k -6 \ge 2E_{in} + 2E_{out}.
\end{equation}
 Then the conclusion is obtained by noticing that 
\begin{equation}\label{count}
  2E_{in} + E_{out}  = \sum_{j=1}^k  \Lambda_j .   
\end{equation}
Indeed, let's count the total number of junction arcs:
any  inner junction arc is counted twice, and corresponds to 
an edge in $\mathcal E _{in}$; any border junction arc is counted once, and corresponds to an edge in $\mathcal E _{out}$.  
\end{proof}

\begin{definition} \label{def:regin}
We set:
\begin{itemize}
\item[--] $\t_r$ the curvilinear triangle bounded by three concave arcs of circle with opening angles $\pi /3$ and radius  $r$, pairwise mutually tangent at a common endpoint; 
\smallskip
\item[--]
$\wt _{r}$  the region bounded by two concave arcs of circle with opening angles $\pi /2$ and radius  $r$,  mutually tangent at a common endpoint, and a line segments tangent to such
arcs at their (noncommon) endpoints;
\item[--]  $\wwt _{r}$ the region bounded by a concave arc of circle with opening angle $2 \pi /3$ and radius  $r$, and two  line segments tangent to such
arc at its endpoints, forming an angle of $\pi/3$. 
\end{itemize}
\end{definition}

\begin{figure}[ht]
\begin{center}
\vskip -1.5cm
\includegraphics[scale=0.22]{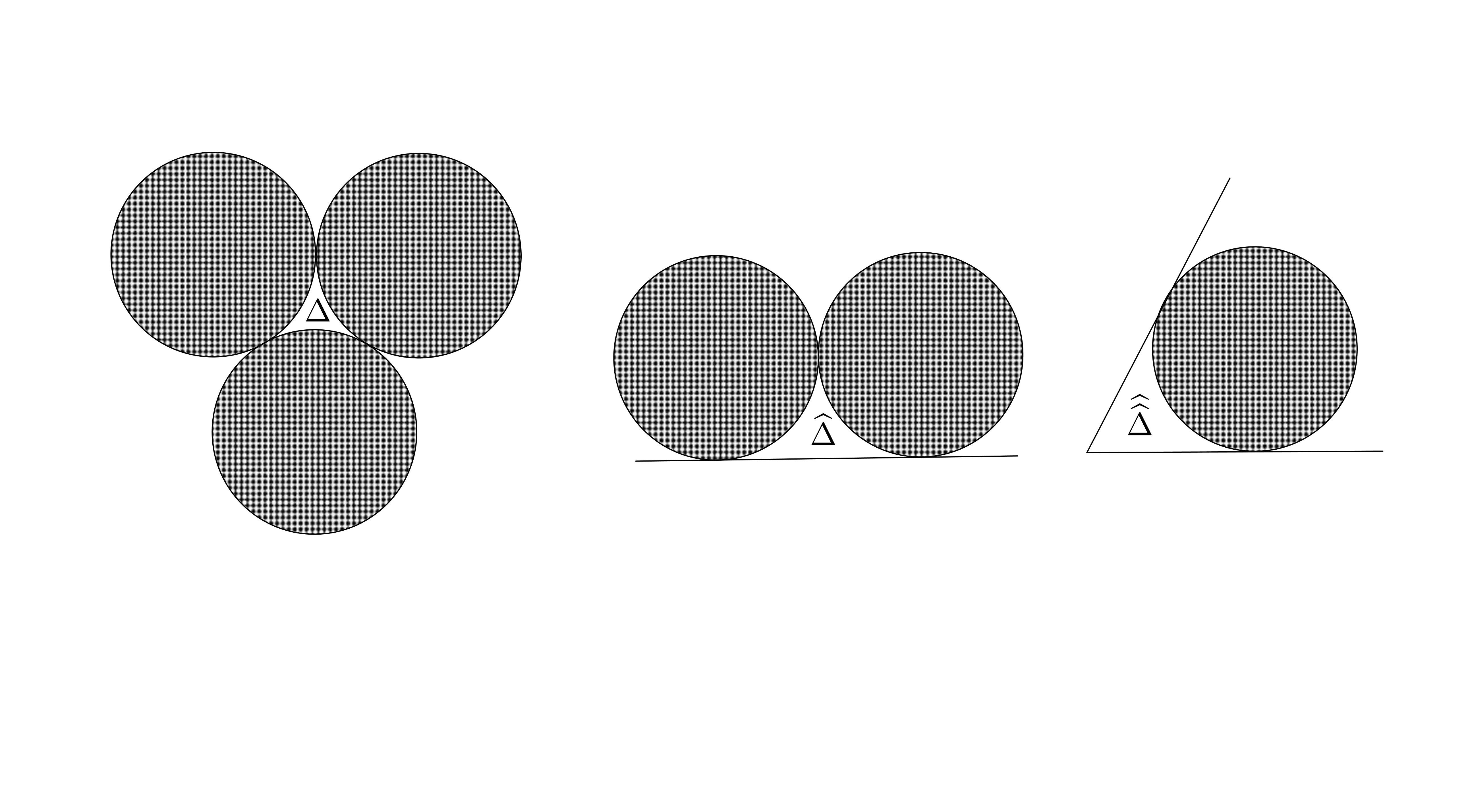}
\vskip -1.5 cm
\caption{The three types of empty regions: $\t, \wt, \wwt$}
\label{fig:odd2}
\end{center}
\end{figure}

\vskip -1.5cm

\begin{proposition}[area of the empty chamber]\label{p:void} 
For a fixed $p\geq 1$, let $\{\Omega _1, \dots, \Omega _k\}$ be an optimal cluster for problem \eqref{f:problemp} 
which satisfies the properties stated in Proposition \ref{p:optimal}. 
 Then the area of the empty chamber 
$\Omega _0 = \mathcal T \setminus \bigcup _{j=1}^k\overline  \Omega _j$
satisfies 
\begin{equation}\label{eq:inn}
|\Omega _0| \geq (2k-2) |\t _{r_*}| +   3    |\wwt _{r_*}|
\end{equation}
\end{proposition}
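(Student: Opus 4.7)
My plan is to combine a component-by-component area lower bound with a topological/combinatorial count. First, by the analysis in Section \ref{sec:important}, I would decompose $\Omega _0$ into its connected components and classify them into three types: \emph{interior} components (positively separated from $\partial \mathcal T$ and bounded by a closed chain of at least three free arcs), \emph{side} components (bounded by free arcs and segments of $\partial \mathcal T$ on a single side), and \emph{corner} components (containing a vertex of $\mathcal T$ and bounded by two segments on adjacent sides plus a border junction arc). Each such component is a simply connected region whose boundary involves concave circular arcs of radius at least $r_* := 1/\max_j h(\Omega_j)$, so that the minimum-area configurations are precisely the model regions $\t_{r_*}$, $\wt_{r_*}$, $\wwt_{r_*}$ of Definition \ref{def:regin}.

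Using the geometric lemmas stated in Section \ref{sec:app}, I would then derive area lower bounds $|C| \ge |\t_{r_*}|$ for each interior component $C$, $|C| \ge |\wt_{r_*}| \ge |\t_{r_*}|$ for each side component, and $|C| \ge |\wwt_{r_*}|$ for each corner component. The inequality $|\wt_{r_*}| \ge |\t_{r_*}|$ is crucial, since it permits bundling interior and side components under a single $|\t_{r_*}|$-type lower bound. When a component has more boundary arcs than the minimum, a refined bound (also from Section \ref{sec:app}) should yield an area proportional to the number of bounding arcs minus a topological correction, which will be important for the global count.

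For the counting of components, I would proceed in two steps. Since the proof of Proposition \ref{p:optimal}(ii) excludes the three vertices of $\mathcal T$ from every $\overline{\Omega_j}$, each vertex of $\mathcal T$ belongs to some corner component; I would then use a topological argument, based on the concavity of the bounding arcs as seen from $\Omega_0$, to show that distinct vertices of $\mathcal T$ belong to distinct corner components, producing the contribution $3 |\wwt_{r_*}|$. For the interior and side components, I would apply Euler's formula to the canonical graph of Definition \ref{def:graph}, in the spirit of Proposition \ref{p:nb}, combined with the global free-arc count $\sum_j \Lambda_j = 2 E_{in} + E_{out}$, to obtain a total contribution of at least $(2k-2)|\t_{r_*}|$.

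The main obstacle is this last combinatorial step: the faces of the canonical graph do not correspond one-to-one with the connected components of $\Omega_0$, so directly reading off the constant $2k-2$ from the Euler bound $F \le 2V-4 = 2k-2$ is not straightforward. I anticipate that one must invoke the \emph{weighted} area lower bound (assigning to each component an area growing linearly in the number of boundary arcs, via a curvilinear triangulation of Section \ref{sec:app}), combined with the global free-arc count from the canonical graph, to extract the factor $(2k-2)$ through an argument analogous to the edge-face counting used in Proposition \ref{p:nb}.
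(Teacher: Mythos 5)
Your overall strategy coincides with the paper's: bound the area of the empty regions from below via the geometric lemmas of Section \ref{sec:app} (area growing linearly in the number of bounding arcs, with the special regions $\t_{r_*}$, $\wt_{r_*}$, $\wwt_{r_*}$ as extremal configurations), and extract the coefficient $2k-2$ from Euler's formula applied to the canonical graph of Definition \ref{def:graph}. The corner contribution $3|\wwt_{r_*}|$ is also handled in the same spirit (it comes out of Lemma \ref{p:triangle3} for the three corner-adjacent regions). However, the step you yourself flag as ``the main obstacle'' --- converting the Euler estimate into the exact factor $(2k-2)$ --- is precisely the crux of the proof, and your proposal leaves it at the level of an anticipated strategy rather than an argument. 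As you correctly observe, the faces of the canonical graph are not in bijection with the connected components of $\Omega_0$, and a naive per-component count fails: the number of components of $\Omega_0$ is in general far smaller than $2k-2$, so assigning one $|\t_{r_*}|$ per component (or even one $|\wt_{r_*}|$ per side component) cannot close the argument.

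The paper's resolution is a concrete combinatorial device that your sketch gestures at but does not supply: every face of the canonical graph enclosing an empty room bounded by $m\ge 3$ arcs is \emph{formally triangulated} by adding $m-3$ (resp.\ $m-2$ in the boundary cases) edges between the cells surrounding that room, following the patterns of Figures \ref{fig:odd} and \ref{fig:line} for the three configurations (a), (b), (c) of the bounding chain of disks. In the modified graph every face has exactly $3$ edges, so $2E=3F$ holds with equality and Euler's formula with $V=k+1$ gives $F=2k-2$ \emph{exactly}. The appendix lemmas are calibrated to this triangulation: a room bounded by $m$ arcs, which accounts for $m-2$ triangular faces, has area at least $(m-2)|\t_{r_*}|$ (Lemma \ref{p:triangle1}), at least $(m-2)|\t_{r_*}|+|\wt_{r_*}|\ge(m-1)|\t_{r_*}|$ (Lemma \ref{p:triangle2}), or at least $(m-1)|\t_{r_*}|+|\wwt_{r_*}|$ (Lemma \ref{p:triangle3}), so each triangular face of the modified graph absorbs at least one copy of $|\t_{r_*}|$ and each of the three corners contributes an extra $|\wwt_{r_*}|$. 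Note also that the natural unit of the argument is the paper's ``empty room'' (all components of $\Omega_0$ enclosed by a single face of the graph, which may be disconnected when non-consecutive disks in the bounding chain touch), not the individual connected component; the inductive Case 1 of the lemmas is designed exactly to handle this disconnectedness. Without the explicit triangulation and this room-by-room (rather than component-by-component) bookkeeping, the factor $2k-2$ does not follow, so as written the proposal has a genuine gap at its central step.
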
 

 \proof

We call {\it empty room} a collection of (one or more) connected components of the empty chamber which are enclosed by  
a face of the canonical graph, and which are {\it not} of type $\wwt$ (namely are not around a corner of $\mathcal T$).

We observe that the area of the empty chamber can be estimated from below by the  global area of all the empty rooms
(the inequality may be strict because there may be cells touching $\partial \mathcal T$ which are not connected by any outer edge to  
$\R ^2 \setminus \overline {\mathcal T}$).

Then, we proceed to minimize the global area  of the empty rooms. To that aim, we modify the canonical graph associated to the optimal cluster so that each face has exactly $3$ edges. 
The modification consists in adding a certain number of formal edges for every empty room having on its boundary more than $3$ arcs.

Given such an empty room $C_0$, 
there exists a family of $m \geq 3$ disks $D_1, \dots, D _m$ of centers $P _1, \dots, P _m$ and radii $r_1, \dots, r _m$, with    
$$\begin{array}{ll} 
& \displaystyle d (P _i, P _{i+1} ) = r _i + r _{i+1}  \qquad \ \forall i = 1, \dots, m-1
\\
\noalign{\medskip}
& \displaystyle d ( P _i , P _j)  \geq r _i + r _j \qquad \qquad \forall i, j \in \{ 1, \dots, m \}, \ { |i-j| \ge 2},
\end{array}
$$
(where $d ( \cdot, \cdot )$ denotes the Euclidean distance and  
all angles $\angle{P_{i-1}P_iP_{i+1}} $ are strictly less than $\pi$), such that  one of the following situation occurs:

\begin{itemize}
\item[(a)]  $d (P _1, P _m ) = r _1 + r _{m}$, and $\partial C_0 \subseteq \partial D _1 \cup \dots \partial D _m$; 

\smallskip 
\item[(b)] $d (P _1, P _m ) > r _1 + r _{m}$, $D_1$, $D_m$ are tangent to one side $S$ of $\mathcal T$, and $\partial C_0 \subseteq S\cup \partial D _1 \cup \dots \partial D _m$; 
; 

\smallskip 
\item[(c)]  $d (P _1, P _m ) > r _1 + r _{m}$, $D_1$, $D_m$ are are tangent to two  consecutive   sides $S'$, $S''$ of $\mathcal T$, and $\partial C_0 \subseteq S' \cup S'' \cup \partial D _1 \cup \dots \partial D _m$.

\end{itemize}

 Now, according to the above cases (a)-(b)-(c), the modification of the graph runs as follows. 

If we are in situation (a), we label the cells around $C_0$ by $1, \dots, m$, and then we add edges joining the couples
$$(m, 2), (2, m-1), (m-1, 3),\dots $$
(see Figure \ref{fig:odd}).

\begin{figure} [ht]
\begin{center}
\includegraphics[scale=0.6]{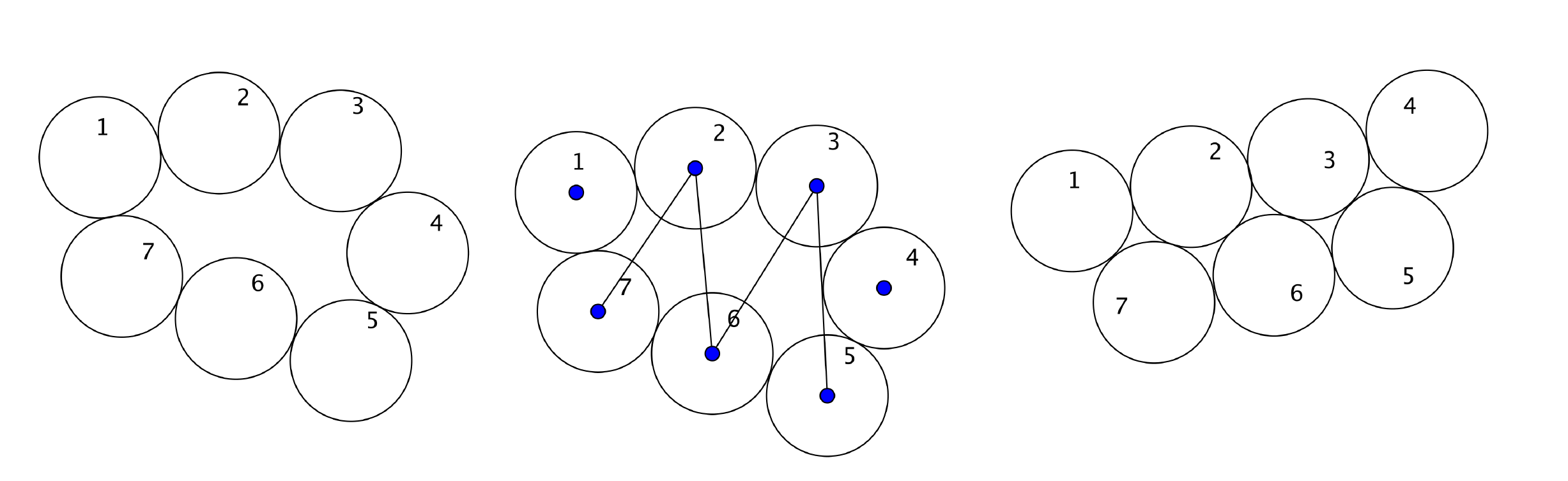}
\caption{Formal topological modification of the graph in situation (a)}
\label{fig:odd}
\end{center}
\end{figure}

If we are in situation (b) or (c),  we do the same kind of procedure starting with the edge $(1,m)$, namely we add edges joining the couples
$$(1, m), (m, 2), (2, m-1), (m-1, 3),\dots $$
(see Figure \ref{fig:line}).

\begin{figure}[ht]
\begin{center}
\includegraphics[scale=0.3]{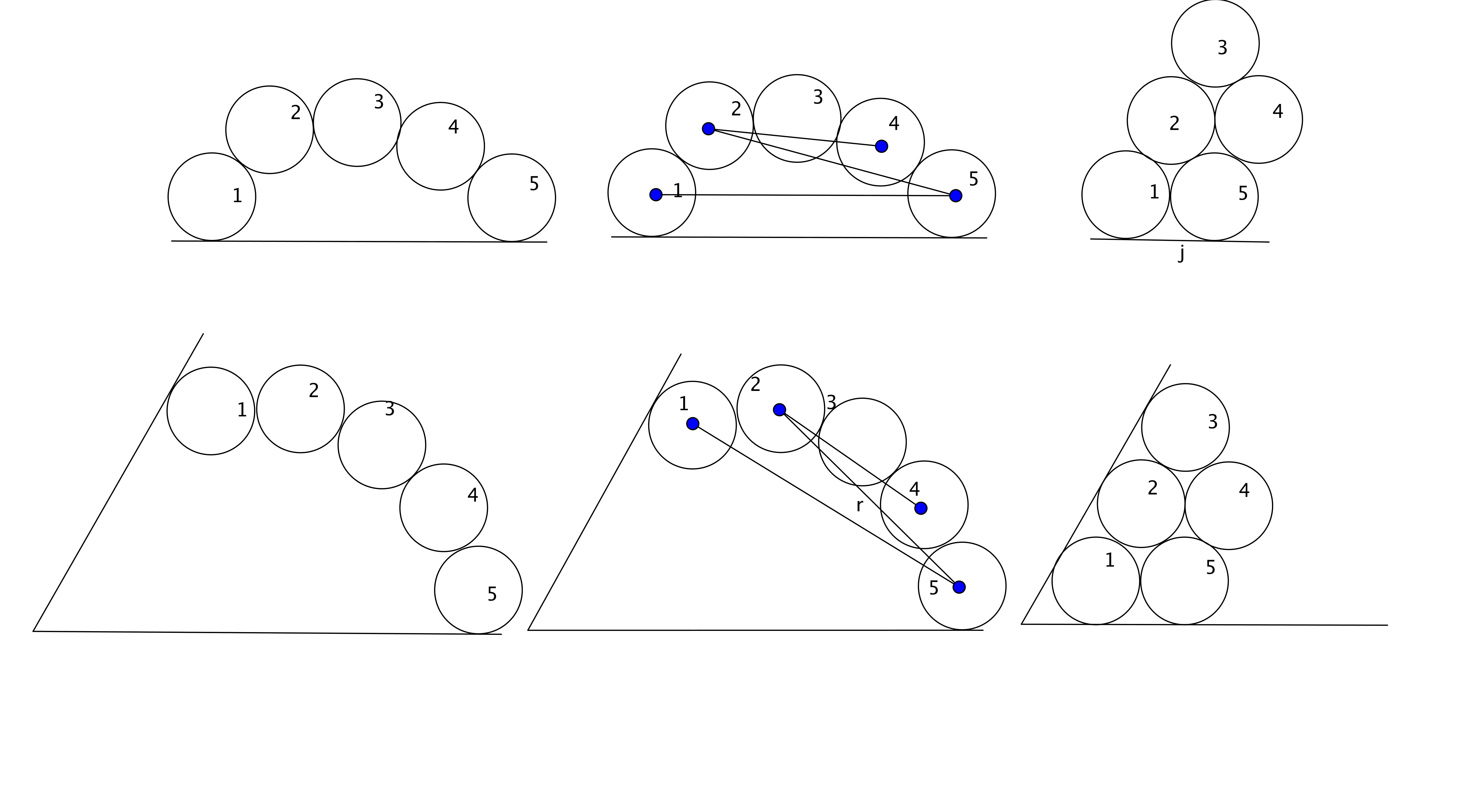}
\caption{Formal topological modification of the graph in situations (b) and (c)  }
\label{fig:line}

\end{center}
\end{figure}

{\it Notice that the fact that the radii of the disks in Figures \ref{fig:odd} and \ref{fig:line} are all equal 
is not relevant to the present topological purposes, and in any case can be a posteriori justified by the results in the Appendix.}

By construction, for the graph thus modified, each face has exactly $3$ edges. 
Hence, we have $2E=3F$. Recalling that $V = k +1$, the Euler formula $V-E+F=2$ gives $F=2k-2$. Then some easy but lengthy geometrical arguments, that we postpone to the Appendix (see Lemmas \ref{p:triangle1}, \ref{p:triangle2}, and \ref{p:triangle3}), imply that the  global area of all the empty rooms is not smaller than $(2k-2)  \t_{r^*}$, 
plus the contribution coming from   $3$  curvilinear triangles in the corners, each one of area $|\wwt_{r^*}|$.

\section{An intermediate result on the inner Cheeger boundary}\label{sec:bdry}

\begin{definition}\label{d:innerCheeger}
Let $\Omega$ belong to the class $\mathcal A$ introduced in Definition \ref{defA}. Let
$\Gamma:=\partial \Omega$, and let $r := h (\Omega) ^ {-1}$ be the radius of the free arcs.

We call {\it inner Cheeger boundary of $\Omega$} the  ``inner parallel curve at distance $r$ from $\Gamma$''
(namely the set of points in $\Omega$ lying at distance $r$ from $\Gamma$), 
endowed  with the same orientation as $\Gamma$.  
\end{definition}

\begin{remark}\label{rem:bf01}
We can make the following observations.

\begin{itemize}
\item [(i)]
The inner Cheeger boundary $\Gamma _r$ may have self intersection points.

\item[(ii)]  If  $\Gamma^l$, $l= 1, \dots, 2 \Lambda$, are the arcs of $\Gamma$ according to Definition \ref{defA}, 
we can decompose $\Gamma _r$ as $\Gamma ^ 1 _ r \cup \dots \cup \Gamma ^{2 \Lambda } _r$, where $\Gamma ^ l_r$ denotes the inner parallel curve at distance $r$ from $\Gamma ^l$, and the free arcs are labelled with an odd number. 
Then,  
 for $l$ odd the inner parallel curve  $\Gamma ^l _r$ is formally reduced to a point. For $l$ even, 
$\Gamma ^l _r$ is uniquely determined as follows:  if $\Gamma ^ {l}$ is an arc of circle with center $C ^l$ and nonzero curvature $K ^l$, $\Gamma ^l _r$ is the arc of circle obtained by applying an homothety of center $C^l$ and ratio $1-\frac{K(\Gamma^{l})}{h(\Omega)}$ to  $\Gamma^{l}$; if $\Gamma ^ {l}$ is a line segment, then $\Gamma ^ {l} _r$ is the line  segment obtained by moving $\Gamma ^ {l}$  in the direction of the inner normal to $\Gamma$ at distance $r$ from its original position.   
\end{itemize}
\end{remark}

\begin{definition}\label{d:innerarea}
Let $\Omega$ belong to the class $\mathcal A$ introduced in Definition \ref{defA}, and let $\Gamma _r$ denote its inner Cheeger boundary according to Definition \ref{d:innerCheeger}. 
We call {\it inner Cheeger area of $\Omega$} the oriented area enclosed by $\Gamma_r$, and we denote it by $A (\Gamma _r)$. Namely, 
$$A (\Gamma _r) = \sum _h m(U_h) |U_h| \, , $$ where  the sum is extended to the bounded connected components $U _h$ of $\R^2 \setminus \Gamma_r$,  $|U _h|$ is the Lebesgue measure of $U _h$, and the number $m(U_h) \in \Z$ is the index of any point of $U_h$ with respect to the oriented curve $\Gamma_r$. 
\end{definition}

 The following result is crucial to our purposes. It can be regarded as a transposition, valid within the class $\mathcal A$, 
of  a well-known result for the inner Cheeger set of convex bodies due to Kawohl and Lachand-Robert (see Theorem 1 in \cite{KaLR});
we also refer to \cite{lns} for a recent extension to domains `without necks'.

\begin{proposition}[representation via inner Cheeger set] \label{p:representation}
Let $\Omega$ belong to the class $\mathcal A$ introduced in Definition \ref{defA}. 
Let  $\Gamma _r $ and $A(\Gamma_r)$ be its inner  Cheeger boundary and inner Cheeger area according to Definitions \ref{d:innerCheeger} and \ref{d:innerarea}. There holds:
\begin{eqnarray}
& A  ( \Gamma_r ) = \pi r ^ 2 & \label{f:area}
\\ \noalign{\smallskip} 
& |\Omega| = r \mathcal H ^ 1 (\Gamma _r) + 2 \pi r ^ 2 \,. & \label{f:rep}
\end{eqnarray} 
\end{proposition}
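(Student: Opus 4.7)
I will work with the arc-length parametrization $\gamma : [0, L] \to \R^2$ of $\partial \Omega$ in the positive orientation, with $L = \mathcal{H}^1(\partial \Omega)$, unit tangent $\tau = \gamma'$, and inward unit normal $n$ (so that $(\tau, n)$ is a positively oriented frame and $\tau \wedge n = 1$, where $\wedge$ denotes the scalar cross product). The inner Cheeger boundary is then parametrized by $\gamma_r(s) = \gamma(s) + r\,n(s)$. The plan is to reduce both identities to Green's formula for the signed area of $\gamma_r$, combined with the Gauss--Bonnet identity $\int_0^L \kappa \,ds = 2\pi$ for the $C^1$ Jordan curve $\gamma$ and the self-Cheeger relation $rL = |\Omega|$.

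The starting observation is that, by the Frenet formulas applied piecewise, $\gamma_r' = (1 - r\kappa)\tau$. The factor $1 - r\kappa$ is nonnegative everywhere: it vanishes on the free arcs (where $\kappa = 1/r$) and on the circular pieces of border junction arcs, and is strictly positive on inner junction arcs (where $\kappa < 1/r$) and on the segments composing border junction arcs (where $\kappa = 0$). In particular $\gamma_r$ is traversed without reversal, so
\[\mathcal{H}^1(\Gamma_r) = \int_0^L (1 - r\kappa)\,ds = L - 2\pi r\]
by Gauss--Bonnet. Inserting this into $r\mathcal{H}^1(\Gamma_r) + 2\pi r^2$ and invoking $rL = |\Omega|$ gives identity \eqref{f:rep} at once.

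To prove \eqref{f:area} I would apply the signed-area formula $A(\Gamma_r) = \frac{1}{2}\oint \gamma_r \wedge \gamma_r'\,ds$, valid for piecewise $C^1$ closed curves with self-intersections and consistent with the winding-number expression in Definition \ref{d:innerarea}. Using $n \wedge \tau = -1$, the integrand expands as $(1-r\kappa)(\gamma \wedge \tau - r)$, which after regrouping yields
\[A(\Gamma_r) \;=\; |\Omega| \;-\; \tfrac{r}{2}L \;-\; \tfrac{r}{2}\!\!\int_0^L\!\!\kappa\,(\gamma \wedge \tau)\,ds \;+\; \tfrac{r^2}{2}\!\!\int_0^L\!\!\kappa\,ds.\]
The last integral is $2\pi$ by Gauss--Bonnet. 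For the third term I would rewrite $\kappa\tau = -n'$ and integrate by parts using $(\gamma \wedge n)' = \tau \wedge n + \gamma \wedge n' = 1 + \gamma \wedge n'$; the boundary contribution vanishes because $\gamma \wedge n$ is continuous on the closed curve, so $\int_0^L \kappa\,(\gamma \wedge \tau)\,ds = L$. Substituting and using once more $rL = |\Omega|$ makes the two $|\Omega|$-contributions cancel in pairs, leaving $A(\Gamma_r) = \pi r^2$.

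\textbf{Main obstacle.} The main technical delicacy is the low regularity: $\gamma$ is only $C^1$ globally, with $\kappa$ having jumps at the endpoints of the arcs of Definition \ref{defA}, and $\gamma_r$ is merely Lipschitz (stationary on free arcs and possibly self-intersecting). Frenet's formulas and the integration by parts must therefore be carried out on each smooth piece separately, with the boundary terms telescoping thanks to the continuity of $\gamma$ and $n$ across the junctions; one should also justify explicitly that Green's signed-area identity applies to the non-injective closed Lipschitz curve $\gamma_r$, which is however a standard fact. Conceptually, the statement is the analogue within the class $\mathcal{A}$ of the Kawohl--Lachand-Robert inner-Cheeger representation for convex bodies, with the Cheeger identity $rL = |\Omega|$ playing the same structural role played by convexity there.
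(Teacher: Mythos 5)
Your proof is correct and reaches the two identities through the same underlying skeleton as the paper --- namely the Steiner-type relations $\mathcal H^1(\partial\Omega)=\mathcal H^1(\Gamma_r)+2\pi r$ and $A(\Gamma_r)=|\Omega|-r\,\mathcal H^1(\partial\Omega)+\pi r^2$, closed up by the self-Cheeger identity $r\,\mathcal H^1(\partial\Omega)=|\Omega|$ --- but the derivation of those relations is genuinely different in execution. The paper argues synthetically, arc by arc: the total-turning identity \eqref{f:rule} (which is exactly your $\int_0^L\kappa\,ds=2\pi$) is proved from scratch by computing in two ways the angle sum of an auxiliary polygon through the arc centers and tangency points, and the area identity is obtained by decomposing $|\Omega|-A(\Gamma_r)$ into explicit Jordan domains $D_i$ (annular sectors and rectangles) bounded by each arc, its inner parallel, and two connecting segments. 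You instead run the whole computation in the continuous setting: $\gamma_r'=(1-r\kappa)\tau$ yields \eqref{per} in one line once the Umlaufsatz is invoked, and the winding-number area integral plus a single integration by parts (using $\kappa\tau=-n'$ and the continuity of $\gamma\wedge n$ across junctions) yields \eqref{area}; I checked the signs and the bookkeeping, including the consistency of $|\gamma_r'|=1-r\kappa\ge 0$ with the lengths $\alpha_i(r_i+r)$, $\beta_i(r_i-r)$ appearing in \eqref{areaCr}, and everything matches. What your route buys is brevity and a cleaner conceptual picture; what it costs is the importation of two standard but nontrivial external facts --- the Hopf Umlaufsatz for a piecewise-$C^2$, globally $C^1$ Jordan curve, and the validity of Green's signed-area formula for a closed, non-injective Lipschitz curve with stationary intervals --- which the paper essentially reproves by hand (the polygon argument for the former, the footnote to \eqref{calcularea} for the latter). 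You correctly flag the only real delicacy, that Frenet and the integration by parts must be carried out piecewise with telescoping boundary terms; with that caveat made explicit, your argument is a complete and valid alternative proof.
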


\proof  
We are going to show the validity of the following Steiner-type formulas:
\begin{eqnarray}
& \mathcal H ^1 (\partial \Omega) = \mathcal H ^ 1 (\Gamma _r) + 2 \pi r  & \label{per}
\\ 
& |\Omega| = A  ( \Gamma _ r) + r \mathcal H ^ 1 (\Gamma _r) + \pi r ^ 2  & \label{area} 
\end{eqnarray} 
Taking into account that $\frac{\mathcal H ^ 1(\partial \Omega)}{ |\Omega|} = \frac{1}{r}$, the required equalities \eqref{f:area} and \eqref{f:rep} will follow. 

To prove \eqref{per}-\eqref{area}, we need to consider the following angles (see Figure \ref{fig:innerCheeger}):
$$\begin{array}{ll}
& \theta_i  \text{:= the opening angles of odd arc of radius $r$}; 
\\  \noalign{\medskip}
& \alpha _i   \text{:= the opening angles of even arcs of radius
$r_i>r$ and negative curvature}; 
\\ \noalign{\medskip}
& \beta _i \text{:= the opening angles of even arcs of radius
$r_i>r$ and positive curvature.}
\end{array}
$$

\begin{figure}
\begin{center}
\includegraphics[scale=0.5]{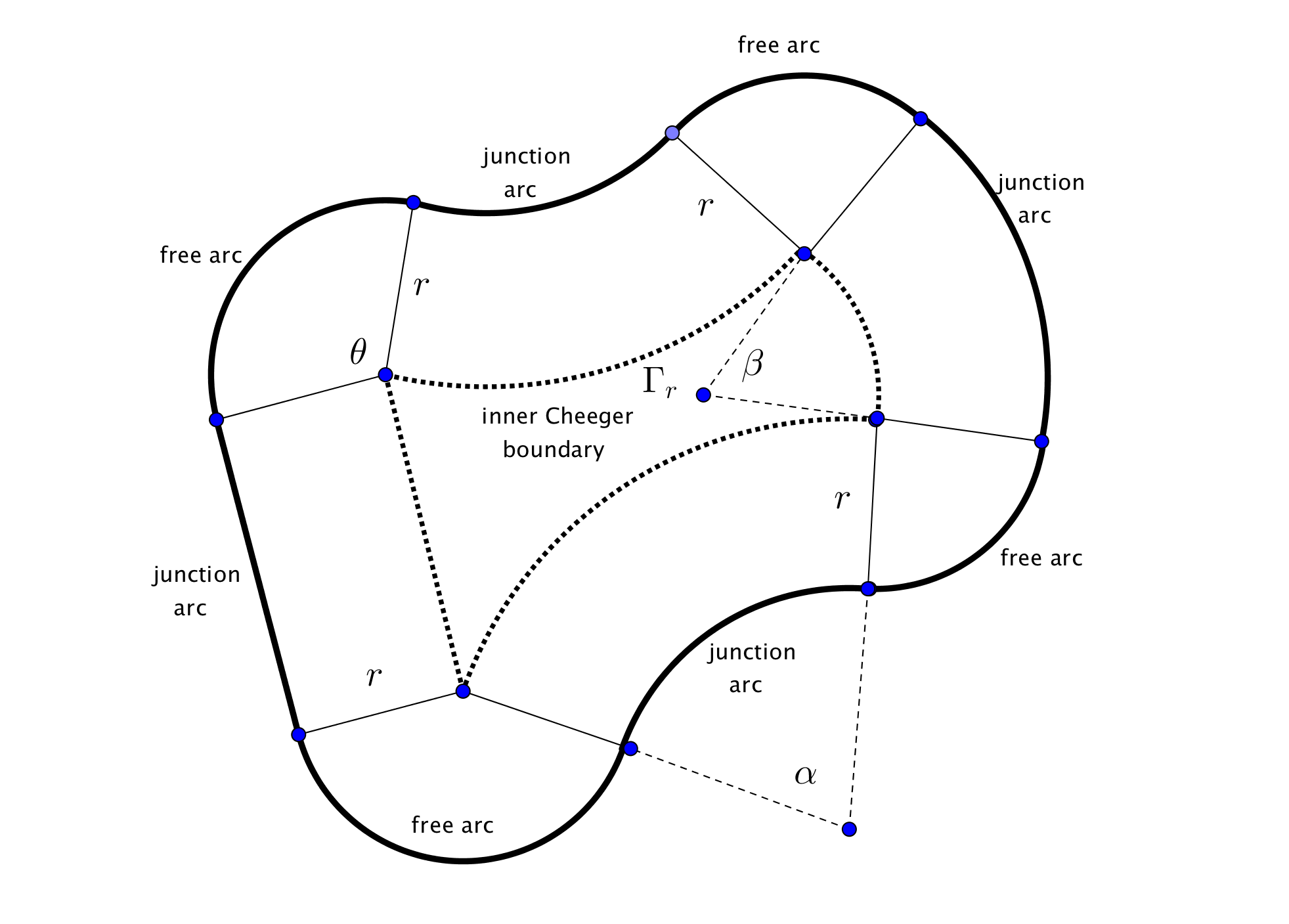}
\caption{The geometry of the inner Cheeger set}
\label{fig:innerCheeger}
\end{center}
\end{figure}

We claim that the above angles obeys  the following rule:
\begin{equation}\label{f:rule}
\sum _{i}   \theta _i  + 
\sum _{i} \beta _i - 
\sum _{i} \alpha _i  
= 2  \pi  \,,
\end{equation}
where the sums are extended to the families of all angles of each type.

In order to prove \eqref{f:rule}, we consider the oriented polygon $P$ having vertices
$$O _1 T _{1,2} O _2 T _{2, 3} \dots O _N  T _{\Lambda, 1} \, , $$
where $T _{i, i +1}$ is the touching point between the curves $\Gamma ^ i$ and $\Gamma ^ {i +1}$, $\Lambda$ is as in Remark \ref{rem:bf01}, and $O _i$ is defined as follows:
$$\begin{array}{ll}
&   \text{-- if $\Gamma ^i$ is a circular arc, $O_i$ is the center of the disk containing $\Gamma ^i$}; 
\\  \noalign{\medskip}
&   \text{-- if $\Gamma ^i$ is a line segment, $O_i$ is an arbitrary point of $\Gamma ^i$.}
\end{array}
$$
We are going to compute the sum of the inner angles of the polygon $P$. To that aim, 
we distinguish the following types of pairs of consecutive curves in $\partial \Omega$:
$$\begin{array}{ll}
&   \text{Type 1: an odd arc of radius  $r$ - an even arc of radius $>r$ and negative curvature}; 
\\  \noalign{\medskip}
&   \text{Type 2: an odd arc of radius  $r$ - an even arc of radius $>r$ and positive curvature}; 
\\  \noalign{\medskip}
&   \text{Type 3: an odd arc of radius  $r$ - an even line segment.}
\end{array}
$$
We denote by $N _i$ the number of pairs of type $i$ which are contained in $\partial \Omega$. 
For $i = 1, 2, 3$, each pair of type $i$ contributes with $4$ vertices of $P$. 

Therefore, 
the sum of the inner angles of $P$ equals 
\begin{equation}\label{sum1}
[4 (N _1 + N _2 + N _3 )
-2]\pi\,.
\end{equation} 
On the other hand, the contribution given to the sum of the inner angles by the pairs of each type is listed below. 
Setting for brevity $\widehat T _{j, j +1}:= \angle  O _j T _{j, j +1} O _{j+1}$, and $\widehat O _j := T_{j-1, j} O _j T_{j, j+1} $, we have
$$\begin{array}{lllll}
&   \text{Type 1}: \qquad \widehat O _{2i-1} =  2 \pi - \theta, \quad &\widehat T _{2i-1, 2i} = \pi,  \quad &\widehat O _{2i} = \alpha , \quad &\widehat T_{2i, 2i+1} =  \pi; 
\\  \noalign{\medskip}
&   \text{Type 2}: \qquad \widehat O _{2i-1} =  2 \pi - \theta, \quad &\widehat T _{2i-1, 2i} = 0, \quad &\widehat O _{2i} =  2 \pi - \beta, \quad &\widehat T_{2i, 2i+1} = 0  ; 
\\  \noalign{\medskip}
&   \text{Type 3}:\qquad \widehat O _{2i-1} =  2 \pi - \theta, \quad &\widehat T _{2i-1, 2i} = \frac{\pi}{2},  \quad &\widehat O _{2i} = \pi, \quad &\widehat T_{2i, 2i+1} =  \frac{\pi}{2}.
\end{array}
$$
From this table we see that the sum of the inner angles of $P$ is:
\begin{equation}\label{sum2}
-\sum _i  \theta _i + \sum _i \alpha _i - \sum _i \beta _i 
+ 4 \pi (N _1 + N _2 +  N _3) \,. 
\end{equation}
Imposing the equality between the expressions in \eqref{sum1} and \eqref{sum2}, we obtain  the required formula \eqref{f:rule}.

Now, relying on the equality \eqref{f:rule}, we are ready to prove \eqref{per}-\eqref{area}.  We introduce the following notation:
$$\begin{array}{ll}
&   \text{-- if $\Gamma ^i$ is a circular arc, we denote by $r (= h (\Omega))$ or $r _i ( >r)$ its radius}; 
\\  \noalign{\medskip}
&   \text{-- if $\Gamma ^i$ is a line segment, 
we denote by $\ell _i$ its length}.
\end{array}
$$

By direct computation, recalling Definition \ref{d:innerCheeger}, we have
\begin{equation} \label{areaC}  \mathcal H ^ 1 (\partial \Omega) = 
\sum _{i }  \theta _i r  + 
\sum _{i }  \alpha _i r _i +\sum _{i }  \beta _i r _i   + \sum _i \ell _i 
\end{equation}
\begin{equation} \label{areaCr}  
\mathcal H ^ 1 (\Gamma _r)  =
\sum _{i }  \alpha _i (r _i + r)  +
\sum _{i }  \beta _i (r _i - r) + \sum _i \ell _i 
  \,.
\end{equation} 
By subtracting and using \eqref{f:rule}, we get 
$$\mathcal H ^ 1(\partial \Omega) - \mathcal H ^ 1 (\Gamma _r) = \Big [  \sum _{i}  \theta _i -  \sum _{i}   \alpha _i +  \sum _{i}  \beta _i    \Big ] r = 2 \pi r \,,  $$
which proves \eqref{per}. 

Now we turn our attention to \eqref{area}. 
We observe that
\begin{equation}\label{calcularea}
|\Omega| = \int_\Gamma x \, dy \qquad\hbox{ and } \qquad A (\Gamma _r)  = \int_{\Gamma_r} x \, dy  \,. \footnote{
Indeed, by the Gauss-Green Theorem, if $U$ is a Jordan domain with positively oriented, piecewise smooth boundary $\Gamma _U$, for any  $f \in C ^ 1 ( \Omega)$  it holds
$\int \!\!  \int _{U} \frac{\partial f }{\partial x } \, dx \, dy = \int _{\Gamma_U} f \, dy$;
in particular, taking $f(x, y) = x$, we get
$|U| = \int _{\Gamma_U} x \, dy$. Applying this formula respectively to $\Omega$ and to the bounded connected components of $\R ^ 2 \setminus \Gamma _r$, we obtain the equalities in \eqref{calcularea}. }
\end{equation}

To compute the above integrals, we use the decompositions
$$\Gamma = \Gamma ^ 1 \cup \dots \cup  \Gamma ^{2 \Lambda }  \qquad \text{ and } \qquad \Gamma _r= \Gamma ^ 1_r \cup \dots \cup  \Gamma ^{2 \Lambda} _r  \, ,$$
and we introduce the oriented line segments
$$S^r_i:= [T^r _{i-1, i} , T _{i-1, i} ]\,, $$ 
where  $T _{i-1, i}$  is the touching point between $\Gamma ^{i-1}$ and $\Gamma ^i$, and 
 and $T^r _{i-1, i}$ is the touching point between $\Gamma ^{i-1}_r$ and $\Gamma ^i_r$ (with the conventions $\Gamma ^0:=  \Gamma ^{2 \Lambda}$ and $\Gamma _ r ^0: = \Gamma _ r ^{2 \Lambda} $). 
We have
  \begin{equation}\label{f:scomposta} \begin{array}{ll} \displaystyle \int _\Gamma x \, dy - \int _{\Gamma _r} x \, dy  & \displaystyle = \sum _{i =1} ^{2 \Lambda}   
\Big [  \int _{\Gamma^i}  x \, dy - \int _{\Gamma _r ^i } x \, dy  \Big ] \\ 
\noalign{\medskip}  
 & \displaystyle  =  \sum _{i =1} ^ {2 \Lambda}  
\Big [  \int _{\Gamma^i}  x \, dy - \int _{\Gamma _r ^i } x \, dy  + \int_ { S^r _i}    x \, dy - \int _ {S^r_{i+1} } x \, dy   \Big ] \,.
\end{array} 
\end{equation}
By construction, for every $i = 1, \dots, N$, the curve $S^r_i+ \Gamma ^ i - S^r_{i+1}   - \Gamma ^i_r$ is the positively oriented boundary of a Jordan domain $D_i$. Thus, each addendum of the last sum in \eqref{f:scomposta} is equal to the Lebesgue measure of $D_i$, which is easily computed as follows: 
\begin{itemize}
\item[--] if $\Gamma ^i$ is an arc of radius $r$ and opening angle $\theta _i$, then $\Gamma ^ i _r$ is a concentric arc of radius $0$, so that 
$$|D_i| = \frac{\theta _i}{2} r ^2\, ;$$ 
\item[--] if $\Gamma ^i$  is a negatively curved arc of radius $r _i >r$ and opening angle $\alpha _i$, then $\Gamma ^ i _r$ is a concentric arc of radius radius $r_i+r$, so that$$|D_i| = \frac{\alpha _i}{2} \big [ ( r _i + r  ) ^ 2 - r _i ^ 2 \big ] = \alpha _i r _i r + \frac{\alpha _i}{2} r ^ 2\,;$$
\item[--] if $\Gamma ^i$  is a positively curved arc of radius $r _i >r$ and opening angle $\beta _i$ , then  $\Gamma ^ i _r$ is a concentric arc of radius $r_i-r$, so that $$|D_i| = \frac{\beta _i}{2} \big [ r _i ^ 2 - ( r _i - r  ) ^ 2 \big ]  = \beta _i r _i r - \frac{\beta _i}{2} r ^ 2\,;$$
\item[--] if $\Gamma ^i$  is a line segment of length $\ell _i$, then $\Gamma ^ i _r$ is a parallel line segment of the same length, so that
$$|D_i| = \ell _i r\,.$$

 \end{itemize} 
Summing up, we obtain 
$$\begin{array}{ll} |\Omega| - A ( \Gamma _r) & \displaystyle = \int _\Gamma x \, dy - \int _{\Gamma _r} x \, dy   =  \sum _{i =1} ^{2 \Lambda}   
|D_i|    \\  \noalign{\medskip} & \displaystyle = 
 \Big [\sum _{i} \frac{\theta _i}{2} +  \sum _{i}  \frac{\alpha _i}{2} 
 - \sum _{i} \frac{\beta _i}{2}    \Big ] r ^ 2 + \Big [  \sum _{i}  {\alpha _i r _i}  + \sum _{i}  {\beta _i r _i}   \Big ] r  + \sum _i \ell _i r  \,.
\end{array}
$$
Next, we subtract from the above expression $r \mathcal H ^ 1 (\Gamma _r)$, that we compute from \eqref{areaCr}. 
We get
$$ |\Omega| - A ( \Gamma _r)  - r \mathcal H ^ 1 (\Gamma _r) =  \Big [\sum _{i} \frac{\theta _i}{2} -  \sum _{i}  \frac{\alpha _i}{2} 
 + \sum _{i} \frac{\beta _i}{2}    \Big ] r ^ 2    \,.$$
Eventually, we invoke \eqref{f:rule} and we obtain \eqref{area}. \qed

\bigskip

\section{Proof of Theorems \ref{t:truehoney}, \ref{t:truehoney_bis}, and \ref{t:truehoney_ter}.}\label{sec:proof} 

\subsection{Proof of Theorem \ref{t:truehoney}}

Let us prove inequality \eqref{aux}.
We take an optimal partition $\{\Omega _1, \dots, \Omega _k \}$  for problem \eqref{f:problemp}.
We set 
$$\begin{array}{ll}
& h (\Omega_j) = h _j = r _j ^ {-1} \qquad \forall  j = 1, \dots, k
\\ \noalign{\bigskip}
& \displaystyle \max _{j = 1 , \dots, k} h _j = h _* = r _* ^ {-1} \,.
\end{array}
$$ 

We now divide the proof of \eqref{aux} in 4  steps. 

\medskip
\underbar {\it Step 1.} For every $j = 1, \dots, k$, we apply Proposition \ref{p:representation} to the cell $\Omega _j$. We denote by  $\Gamma _{r _j}$ the inner Cheeger boundary of $\Omega _j$.

By \eqref{f:rep}, we have
\begin{equation}\label{radice}
|\Omega _j | = r_j \mathcal H ^ 1 (\Gamma _{ r_j}) + 2 \pi r_j ^ 2 \,.
\end{equation}

multiplying by $h_j ^ 2$, we have
\begin{equation}\label{moltiplicata}
h _j ^ 2 |\Omega _j | - h_j \mathcal H ^ 1 (\Gamma _{ r_j}) = 2 \pi  \,.
\end{equation}

We look at the polynomial $x \mapsto p_j (x):=  |\Omega _j| x ^ 2   -  \mathcal H ^ 1 (\Gamma _{ r_j})  x $. 
By \eqref{radice} we have that $h _j$ is larger that the largest root of $p$, namely 
\begin{equation}
h _j > \frac{\mathcal H ^ 1 (\Gamma _{ r_j}) }{|\Omega _j |} \,.
\end{equation} 
Then, since $h _ * \geq h _j$, we have $p _j ( h _*) \geq p _j ( h _j )$, and we infer from \eqref{moltiplicata} that
\begin{equation}\label{pre-endstep2}
h _* ^ 2 |\Omega _j | \geq h_* \mathcal H ^ 1 (\Gamma _{ r_j}) + 2 \pi  \,.
\end{equation} 
We conclude this step by summing the above inequality over $j$:
\begin{equation}\label{endstep2}
h _* ^ 2  \sum _{j=1} ^k |\Omega _j | \geq h_* \sum _{j=1} ^k  \mathcal H ^ 1 (\Gamma _{ r_j}) + 2 \pi k \,.
\end{equation}

\medskip
\underbar {\it Step 2.} In order to estimate from below the r.h.s.\ of \eqref{endstep2}, we are going to use Hales hexagonal isoperimetric inequality.  
According to \eqref{f:area} we  
have $A (\Gamma _{r_j}) = \pi r_j ^ 2$ for every $j$. Therefore
$$ A  \Big ( \frac{ \Gamma _{r_j} }{\sqrt{\pi r _* ^ 2 }} \Big ) = \frac {\pi r_j ^ 2}  {\pi r _* ^ 2 } \geq 1\, , $$
so that 
$$\min \Big \{  A  \Big ( \frac{ \Gamma _{r_j} }{\sqrt{\pi r _* ^ 2 }} \Big )   , 1 \Big \} 
= 1 \,.$$

On the inner Cheeger boundary  $\Gamma_{ r_j }$, we fix the following family $\mathcal N _j$ of nodes: first,  we take as nodes all 
the points which are at distance $r _j$ from an odd free arc of $\partial \Omega _j$ 
(in equivalent terms, any such node joins two arcs of $\Gamma _{r_j}$ which are parallel to two consecutive even junction arcs of $\partial \Omega _j$ separated by a free arc); 
then, we add the following ``exceptional nodes'': if a border junction arc of $\partial \Omega _j$ contains  
 different segments lying on $\partial \mathcal T$, we take as nodes also the points in $\Gamma _{r_j}$ which are at distance $r_j$ from the endpoints of all these segments.  
 
Accordingly, we write  $\Gamma _{r_j} = \Gamma ^ 1 _{r_j} \cup \dots \cup \Gamma ^{N_j} _{r_j}$, where $N_j$ is the cardinality of the family of nodes $\mathcal N _j$, and, for $i = 1, \dots, N _j$, $\Gamma ^i_{r_j}$ is the (oriented) portion of $\Gamma _{r_j}$ delimited by two consecutive nodes $n_{i-1},  n _{i}$ (with the convention $n _0 = n _{N_j}$).

Now, we set
$T(\Gamma_{ r_j })$ the (truncated) deficit associated to the  oriented curve $\Gamma_{ r_j }$ and the family $\mathcal N _j$.  
Namely,
$$ T(\Gamma_{ r_j }): = \sum _{i=1} ^ {N _j} x (\Gamma^i _{r_j}) \wedge 1 \vee ( -1) \,,$$
where $x  (\Gamma^i _{r_j})$ is the signed area enclosed by the oriented curve $\Gamma^i_{r_j} \cup [n _{i}, n _{i-1}]$.

 Then,
Hales' hexagonal isoperimetric inequality \cite[Theorem 4]{Hales} gives
\begin{equation}\label{hales}
\mathcal H ^ 1\Big  (\frac{ \Gamma _{ r_j }}{ \sqrt {\pi r_*^ 2}} \Big )  \geq - \frac{1}{\pi r_*^ 2} T (\Gamma _{ r_j } )\c- (N  _j - 6) 0.0505 + 2 \c .
\end{equation}
 where $2 \c$  is the perimeter of the unit area regular hexagon. 

We now multiply \eqref{hales} by $\sqrt \pi$, and we sum over $j = 1, \dots, k$, taking into account that:
\begin{eqnarray} -\sum _{j=1} ^ k T (\Gamma _{ r_j } ) \geq 0 & \label{deficit} 
\\-\sum _{j=1} ^ k (N _j - 6)  \geq 0 & \label{mean6}
\end{eqnarray}  
 To obtain \eqref{deficit},  we  observe that each 
piece of curve $\Gamma ^ i _{r_j}$ is an arc of circle, possibly of zero curvature  (in particular, thanks to the addition of the exceptional nodes,
$\Gamma ^ i _{r_j}$ cannot be a broken line). If $\Gamma ^ i _{r_j}$ has zero curvature, it produces a zero deficit. 
 If $\Gamma ^ i _{r_j}$ has a nonzero curvature, it is parallel to a junction arc between $\Omega_j$ and another cell $\Omega_l$, so that  it produces two deficits.  Assume the  curvature of $\Gamma ^ i _{r_j}$,  seen from $\Omega_j$,   has a negative sign. Then the deficit  $x(\Gamma ^i_{ r_j } )$ (the dashed region in Figure \ref{fig:excess}) has a negative sign, while the deficit  $x(\Gamma^i _{ r_l} )$ (the black region in Figure \ref{fig:excess}) has a positive sign, being in absolute value smaller than the previous one. This is simply due to the fact that the two regions which contribute to the deficit are homothetic, with a ratio larger than one. We conclude that \eqref{deficit} holds true. 

 To obtain \eqref{mean6}, we observe that
\begin{equation}\label{scompo}
\sum _{j=1} ^kN  _j \leq \sum _{j=1} ^k \Lambda _j  + 3  \leq 6k \, ,
\end{equation}
where the second inequality holds true by Proposition \ref{p:nb}.  To obtain the first equality in \eqref{scompo}, we observe that
 the arcs of $\Gamma _{r_j}$ are in bijection with the junction arcs in $\partial \Omega _j$, except for the extra arcs created in $\Gamma _{r_j}$ by exceptional nodes. 
Now, the maximal  possible number of such  extra arcs  is $3$:  in fact,
recalling that distinct segments contained into a unique border junction arc must lie on different sides of $\mathcal T$ (see end of Section \ref{sec:important}), we see that the configuration containing the highest number of extra arcs (equal $3$) is the one in which  there are $3$  border junction arcs, each one containing $2$ segments (lying on consecutive sides of $\mathcal T$).

\begin{figure}
\begin{center}
\includegraphics[scale=0.5]{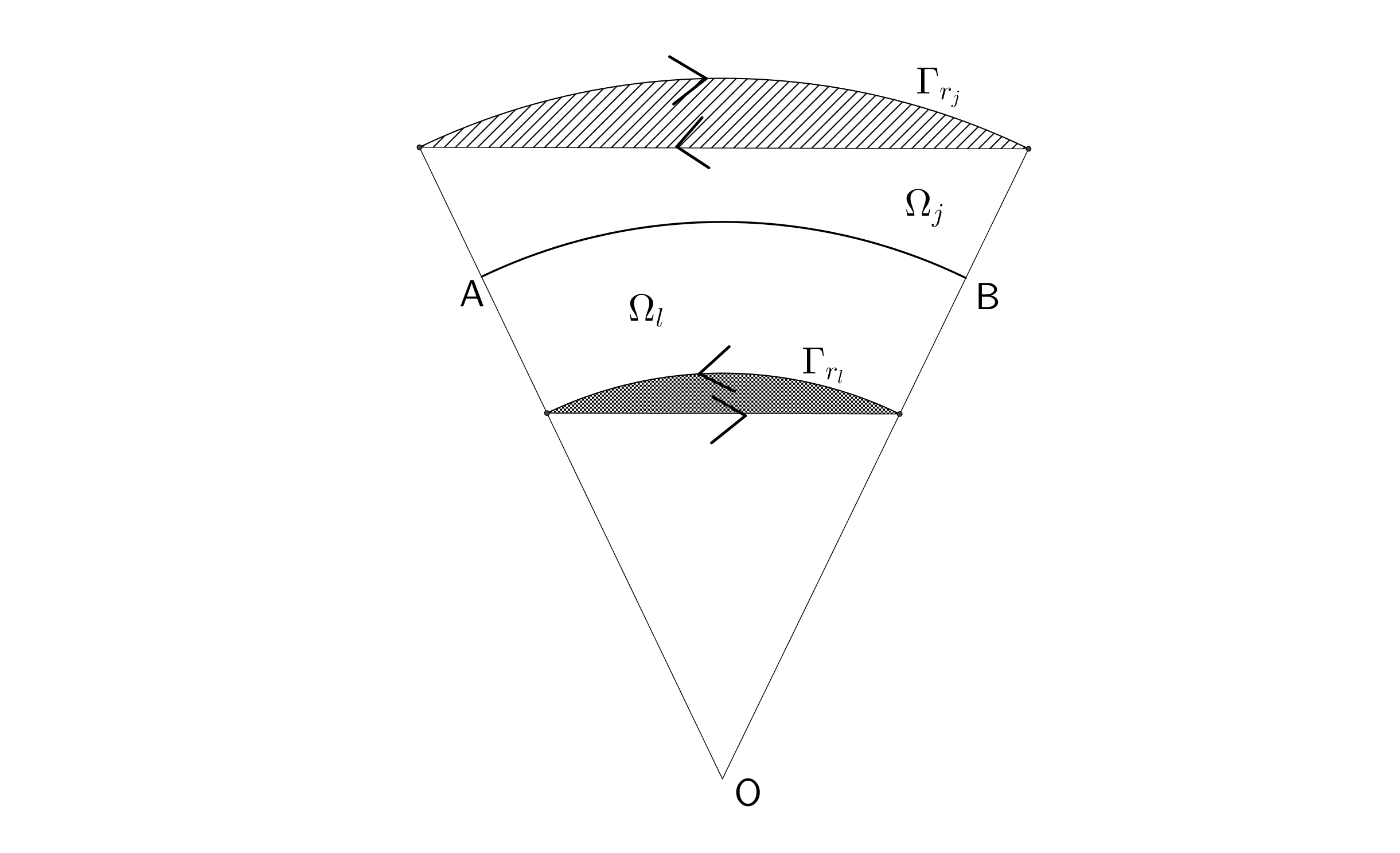}
\caption{The positive contribution of the polygonal deficit}
\label{fig:excess}
\end{center}
\end{figure}

\smallskip
By \eqref{hales}, \eqref{deficit}, and \eqref{mean6}, we get  the following lower bound for the r.h.s.\ of \eqref{endstep2} 
\begin{equation}\label{boundbelow}
h_*  \sum _{j=1} ^ k   \mathcal H ^ 1 (\Gamma _{r_j}) + 2 \pi k \geq k \big [  2\, \sqrt \pi  \c + 2 \pi\big ] \,.
\end{equation}

\medskip
\underbar{\it Step 3.} By elementary computation, we know that $|\t _{r_*}| = \frac{r_*^2}{2 } ( 2 \sqrt 3 - \pi)$  and $
|\wwt _{r_*}|=\frac{r_*^2}{3 } ( 3\sqrt 3 - \pi)$.
 In order to estimate from above the l.h.s.\ of  \eqref{endstep2}, we exploit Proposition \ref{p:void}. 
Inequality \eqref{eq:inn}, together with the computations above, imply 
$$|\Omega _0| \geq 2k |\t _{r_*}|.$$ 
So 
$$ |\Omega _0| \geq    k ( 2 \sqrt 3 - \pi )  \frac{1}{h _* ^ 2} \,, $$ 
yielding
\begin{equation}\label{boundabove} 
h_* ^ 2 \sum _{j=1}^k |\Omega _j |  \leq h_* ^ 2 ( |\mathcal T| - |\Omega _0| ) \leq  \big [ h _* ^ 2  |\mathcal T| -  k ( 2 \sqrt 3 - \pi) \big ]\,.
\end{equation}

\medskip
\underbar{\it Step 4 (conclusion).}  We put together the information coming from the previous three steps. By \eqref{endstep2}, \eqref{boundbelow}, and \eqref{boundabove}, we have: 

$$ \frac{|\mathcal T|} {k} h_* ^ 2 \geq \pi + 2 \sqrt 3 + 2\, \sqrt \pi  \c = \Big [ \frac{12\sin(\frac{\pi}{6})+\sqrt{12\pi \sin(\frac{\pi}{3})}}{\sqrt{12 \sin(\frac{\pi}{3})}} \Big ] ^ 2 = [h (H)] ^2\,. $$
This concludes the proof of \eqref{aux}.

\medskip We now turn to the proof  of \eqref{tesihoney}.  In order to deduce it from  \eqref{aux}, it is enough to check that 
$\widetilde M _{k, p} (\Omega)$ converges to $M _k (\Omega)$ in the limit as $p \to + \infty$.  
In fact, 
this follows from the inequalities
\begin{equation}\label{cfr} M_k ({\mathcal T}) \leq \widetilde M _{k, p} ({\mathcal T}) \leq k ^ {1/p} M _k ({\mathcal T}) \,, 
\end{equation}
which are readily obtained as follows. 
If $\{\Omega ^ p _1, \dots, \Omega ^ p _k \}$ is an optimal solution for $M _{k,p} ({\mathcal T})$, and 
$\{\Omega ^ \infty _1, \dots, \Omega ^ \infty _k \}$ is an optimal solution for $M _{k} ({\mathcal T})$, we have
$$
\begin{array}{ll}
& \displaystyle M _{k} ({\mathcal T}) \leq  \max_{j = 1, \dots , k}  h  (\Omega^p _j ) = \widetilde M _{k, p} ({\mathcal T})  \\ 
\noalign{\medskip} 
& \displaystyle k [M _{k} ({\mathcal T}) ]^p  \geq \sum _{i=1} ^k h ^p (\Omega^\infty _j )  \geq  \sum _{i=1} ^k h ^p (\Omega^p _j ) \geq  \max_{j = 1, \dots , k} h ^p (\Omega^p _j )
= [\widetilde M _{k, p} ({\mathcal T})]^p \,.
\end{array}$$
\qed

\subsection{Proof of Theorem \ref{t:truehoney_bis}}

 Let $k$ be fixed.
Clearly, from the definition of ${M_k (\mathcal T_k})$ and from the geometry of $\mathcal T _k$, precisely since $\mathcal T _k$ contains the $k$-cluster made by $k$ copies of $H$, it holds
$$\frac{{|\mathcal T_k|}^\frac 12}{k^\frac 12}M_k({\mathcal T_k}) \leq h(H).
$$
Assume by contradiction that the strict inequality holds, namely
\begin{equation}\label{tesihoney_contrary}
M_k({\mathcal T_k}) = ( 1- \delta ) h(H) \frac{k^\frac 12}{{|\mathcal T_k|}^\frac 12} \, , \qquad \text{ with } \delta \in (0, 1)\,.
\end{equation}
Let $\mathcal T$ be the equilateral triangle of fixed area, say equal $1$. 
For every $\eta \in \N$, $\mathcal T$ contains a family of mutually disjoints $k$-triangles $\{\mathcal T^i _k\}_{i = 1, \dots, \eta}$, 
having  the same area, infinitesimal as $\eta$ tends to $+\infty$,  and such that 
\begin{equation}\label{area.bis}
\lim _{\eta \to + \infty} \big | \mathcal T \setminus \bigcup _{ i=1 } ^ \eta \mathcal T^i _k \big  |   = 0\,.
\end{equation}
By Theorem \ref{t:truehoney}, we have
\begin{equation}\label{uno}
M _{\eta k} (\mathcal T) \geq h (H) (\eta k) ^ {1/2}\,.
\end{equation}
On the other hand, by using assumption \eqref{tesihoney_contrary} (applied to each of the $k$-triangles $\{\mathcal T^i _k\}$, for $i = 1, \dots, k$), we infer that there exists a $(\eta k)$-cluster of $\mathcal T$ whose  cells have a Cheeger constant not larger than $ ( 1- \delta ) h(H) \frac{k^{\frac {1}{2}}} {{|\mathcal T^i_k|}^\frac 12}$. Thus, 
\begin{equation}\label{due}
M _{\eta k} (\mathcal T) \leq   ( 1- \delta ) h(H) \frac{k^{\frac {1}{2}}} {{|\mathcal T^i_k|}^\frac 12}\,.
\end{equation}
By combining \eqref{uno} and \eqref{due}, we obtain 
$$( 1 - \delta) \geq (\eta |\mathcal T ^i_k| )^ {1/2} \,.$$
In the limit as $\eta \to + \infty$, the above inequality gives a contradiction: indeed, in view of \eqref{area.bis}, we have $\lim\limits _{\eta \to +\infty} \eta |\mathcal T ^i_k| = |\mathcal T| = 1$.
\qed

\subsection{ Proof of Theorem \ref{t:truehoney_ter}  } 

 Once proved Theorem \ref{t:truehoney_bis}, the way Theorem \ref{t:truehoney_ter} is deduced is the same as in case of convex cells treated in \cite{bfvv17}. Thus we limit ourselves to indicate the strategy, referring to \cite{bfvv17} for the detailed arguments.  

First, one shows that the equality \eqref{tesihoney_bis}
 in Theorem \ref{t:truehoney_bis} extends to the case in which the $k$-triangle $\mathcal T _k$ is replaced by a ``$k$-cell'' $\Sigma _k$,  meant as a connected set of arbitrary shape obtained as the union of $k$ hexagons lying in a tiling of $\R ^2$ made by a family of copies of a regular hexagon. The passage from a $k$-triangle to a $k$-cell  is performed as follows. For simplicity, and without loss of generality, we can assume that $|\Sigma _k| = k$.  Since $\Sigma _k$ contains  a $k$-clusters made by $k$ copies of $H$, it holds $M _k (\Sigma _k ) \leq h (H)$. Assume by contradiction that $M _k (\Sigma _k ) < h (H)$. This means that there exists a $k$-cluster $\{\Omega _j\}$ of $\Sigma _k$ such that $\max _{j= 1, \dots, k} h (\Omega _j) < h (H)$. We can assume (up to shrinking a little bit the sets $\Omega _j$) that each of them is at positive distance from $\partial \Sigma _k$. Then we embedd $\Sigma _k$ into a big $k'$-triangle $\mathcal T _{k'}$, with $k' >k$, and we consider the $k'$-cluster of $\mathcal T _{k'}$ which is made by $ \Omega _j$ (for $j = 1, \dots, k$) union $ \widetilde H _j$ (for $j = 1, \dots, k'-k$), where $\widetilde H _j$ are slight deformations of the copies of $H$ contained into $\mathcal T _{k'}\setminus \Sigma _k$, constructed so that $h(\widetilde H _j) < h (H)$
(this can be done by continuity and since we have assumed ${\rm dist} (\Omega _j, \partial \Sigma _k)>0$). 
We have thus constructed a $k'$-cluster of $\mathcal T _{k'}$ in which each cell has a Cheeger constant strictly less than $h (H)$, against the equality \eqref{tesihoney_bis}.  

Now, using the equality \eqref{tesihoney_bis}
 for $k$-cells, it is possible to show separately the inequalities
$$\limsup_{k \to + \infty}\frac{|\Omega| ^ {1 /2}} {k ^ {1/2}}  M _{k}  (\Omega)  \leq
h ( H) \qquad \text{ and } \qquad
\liminf_{k \to + \infty}\frac{|\Omega| ^ {1 /2}} {k ^ {1/2}}  M _{k}  (\Omega)  \geq
h ( H)\, 
$$
via a blow up argument. More precisely, 
the upper bound inequality is proved by dilating $\Omega$ so that it is well approximated from {\it inside} with a $k$-cell, and using just the homogeneity and decreasing  monotonicity of $M_k (\cdot) $ by domain inclusion. The lower bound inequality is proved by dilating $\Omega$ so that it is well approximated frou {\it outside} with a $k$-cell, and using now, 
besides the behaviour of $M _k (\cdot)$ under dilations and inclusions,  
the crucial information that \eqref{tesihoney_bis}
 holds  for $k$-cells. 
\qed

\section {Appendix: geometrical estimates for the empty chamber}\label{sec:app} 

We give here three geometrical lemmas, in which we estimate  from below the area of  the region $V$ bounded by a ``closed chain" of consecutive tangent disks (Lemma \ref{p:triangle1}), 
by an ``open chain" of consecutive tangent disks and a segment (Lemma \ref{p:triangle2}),  and by an ``open chain'' of consecutive tangent disks and two line segments forming an angle of $\pi/3$ (Lemma \ref{p:triangle3}). 

These results are needed in the proof of Proposition \ref{p:void} in order to estimate from below the global area of all the empty rooms. 
More precisely, referring to the proof of Proposition \ref{p:void}, 
Lemma \ref{p:triangle1}, \ref{p:triangle2} and \ref{p:triangle3} concern respectively the area of an empty room of type (a), (b), and (c): it turns out to be
not smaller than the number of faces associated with the room in the modified graph times the area of a curvilinear triangle $\t_{r_*}$, with the addition of an extra curvilinear triangle $\wwt_{r_*}$ in case (c).

As usual, we denote by $d (\cdot, \cdot)$ the Euclidean distance. 

\begin{lemma}\label{p:triangle1}
Let $D_1, \dots, D _m$ be a family of $m \geq 3$ disks of centers $P _1, \dots, P _m$ and radii $r_1, \dots, r _m$ such that  
$$\begin{array}{lll} 
& \displaystyle d (P _i, P _{i+1} ) = r _i + r _{i+1} & \forall i = 1, \dots, m\quad  
\\
\noalign{\medskip}
& \displaystyle d ( P _i , P _j)  > r _i + r _j   & \forall i, j \in \{ 1, \dots, m \}, \ { |i-j| \ge 2}
\\
\noalign{\medskip}
& \displaystyle \angle{P_{i-1}P_iP_{i+1}}  < \pi  & \forall i= 1, \dots, m.
\end{array}
$$
(with the conventions $m+1=1$ and $0 = m$). 
 
Setting $V$  the complement in $\R ^2$ of the unbounded connected component of $\R ^2 \setminus \cup _{i=1} ^ m D _i $, and $r _*:= \min \{r_1, \dots , r _m \}$, there holds
$$|V| \geq (m-2) |\t_{r_*}|\, .$$
\end{lemma}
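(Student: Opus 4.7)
Since every interior angle at $P_i$ is strictly less than $\pi$, the polygon $Q = P_1 P_2 \cdots P_m$ is convex, and the closed disks $\overline{D_i}$ meet the interior of $Q$ exactly in circular sectors of opening angle $\alpha_i = \angle P_{i-1} P_i P_{i+1}$ and radius $r_i$ (neighbouring disks are tangent on the edges of $Q$, and non-neighbouring disks are pairwise disjoint by hypothesis). Writing $H$ for the bounded component of $\R^2 \setminus \bigcup_i D_i$ (the ``hole''), one has, up to sets of measure zero,
$$Q \;=\; H \;\cup\; \bigcup_i \big(\overline{D_i}\cap Q\big), \qquad |Q| \;=\; |H| \;+\; \tfrac{1}{2}\sum_{i=1}^m \alpha_i r_i^2.$$
Since $Q \subseteq V$, we have $|V| \ge |Q|$, and the claim reduces to proving $|H| \ge (m-2)|\t_{r_*}|$.

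My plan is to triangulate $Q$ into $m-2$ triangles via an ear decomposition (always possible in a convex polygon) and attribute at least $|\t_{r_*}|$ of hole-area to each. At each step I would pick a vertex $P_j$ such that $P_{j-1} P_j P_{j+1}$ is an ear; the three disks $D_{j-1}, D_j, D_{j+1}$ have radii $\ge r_*$ and satisfy the ``chain'' relations $|P_{j-1}P_j| = r_{j-1} + r_j$, $|P_jP_{j+1}| = r_j + r_{j+1}$, $|P_{j-1}P_{j+1}| \ge r_{j-1} + r_{j+1}$. The central step is to show that the portion of this ear triangle not covered by the three corner sectors has area at least $|\t_{r_*}|$. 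Summing over the $m-2$ ears (the angle at each vertex being partitioned among the ears containing it, so that the sector areas reassemble correctly) yields $|H| \ge (m-2)|\t_{r_*}|$.

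The key geometric inequality is therefore the following ``three-disk comparison'': for a triangle $T$ with vertices carrying disks of radii $r_a, r_b, r_c \ge r_*$, with $D_a$--$D_b$ and $D_b$--$D_c$ tangent and $|P_a P_c| \ge r_a + r_c$, the residual area $|T| - \tfrac{1}{2}\sum_i \theta_i r_i^2$ (with $\theta_i$ the interior angles of $T$, summing to $\pi$) is at least $|\t_{r_*}|$. When the three disks are mutually tangent, this residual is exactly the classical curvilinear-triangle area
$$A_\Delta(r_a, r_b, r_c) \;=\; \sqrt{(r_a+r_b+r_c)\, r_a r_b r_c} \;-\; \tfrac{1}{2}\sum_i \alpha_i r_i^2,$$
where the $\alpha_i$ are the angles at the vertices of the tangent-center triangle (determined by the cosine rule). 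Relaxing the tangency of $D_a$--$D_c$ (increasing $|P_a P_c|$) only enlarges the enclosed residual region, so the inequality reduces to the monotonicity claim $A_\Delta(r_a, r_b, r_c) \ge A_\Delta(r_*, r_*, r_*) = |\t_{r_*}|$ on $\{\rho_i \ge r_*\}$.

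The main obstacle is this symmetric three-variable monotonicity of $A_\Delta$. A direct partial-derivative computation is in principle tractable but tedious, because the $\alpha_i$ depend nontrivially on the radii via the cosine rule. A cleaner route is a geometric deformation: starting from $(r_*,r_*,r_*)$, which yields exactly $\t_{r_*}$, I would increase one radius at a time and verify that the curvilinear triangle picks up an additional annular strip on the side of the expanding disk, disjoint from the original $\t_{r_*}$; iterating over the three radii gives the monotonicity. Combining this three-disk comparison with the ear decomposition and summing over the $m-2$ triangles concludes the proof.
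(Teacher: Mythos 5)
Your opening reduction is sound: since $\partial Q\subseteq\bigcup_i\overline{D_i}$ and no disk crosses a non-adjacent edge, one indeed has $V\supseteq Q$ and $|Q|=|H|+\tfrac12\sum_i\alpha_i r_i^2$, so the lemma reduces to $|H|\ge(m-2)|\t_{r_*}|$; and your treatment of three mutually tangent disks (monotonicity of $A_\Delta$ in each radius, proved by growing one disk at a time) is essentially the paper's initial step $m=3$. The gap is in the ear decomposition. First, only the very first ear has two tangent sides: once it is cut off, the new edge $P_{j-1}P_{j+1}$ is a diagonal of length strictly greater than $r_{j-1}+r_{j+1}$, so every subsequent triangle has at most one tangent side and your ``three-disk comparison'' as stated does not apply to it. Second, and more seriously, the per-triangle inequality your summation needs, namely $|T|-\tfrac12\sum_i\theta_i r_i^2\ge|\t_{r_*}|$, is false. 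Take $r_{j-1}=r_j=r_{j+1}=r_*$, $|P_{j-1}P_j|=|P_jP_{j+1}|=2r_*$ and $|P_{j-1}P_{j+1}|=3.9\,r_*$: this is admissible (the angle at $P_j$ is about $154^\circ<\pi$ and $3.9r_*>2r_*$, and it extends to a genuine $4$-disk chain), yet $|T|\approx 0.87\,r_*^2$ while $\tfrac12\sum_i\theta_i r_i^2=\tfrac{\pi}{2}r_*^2\approx 1.57\,r_*^2$, so the residual is negative. The reason is that for such a flat ear the sectors are not contained in the triangle: they spill across the diagonal into the neighbouring triangles, so the algebraic residual is not the area of any enclosed region, and the claim that ``relaxing the tangency of $D_a$--$D_c$ only enlarges the residual region'' loses its meaning (the gap between $D_{j-1}$ and $D_{j+1}$ opens the region into the rest of the polygon). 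Your bookkeeping is globally consistent --- the residuals do sum to $|H|$ --- but the bound cannot be localized triangle by triangle; at best one could hope to choose the diagonals adaptively so that every sector stays inside its triangle, and you give no argument that such a triangulation exists for all $m$.

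The paper circumvents exactly this difficulty by arguing globally, by induction on $m$ over minimizing configurations. If $V$ is disconnected, the chain splits into two shorter closed chains and induction applies. If $V$ is connected, one first shows (by the same shrink-one-disk deformation you propose) that all radii must equal $r_*$ at a minimizer, and then that for $m\ge4$ any nondegenerate configuration of equal tangent disks can be perturbed --- moving $D_2$ and $D_3$ while preserving the tangencies with $D_1$ and $D_4$ --- so as to strictly decrease the area of the quadrilateral $P_1P_2P_3P_4$ and hence of $V$; thus the infimum is approached only along degenerate limits, which again decompose into triples of mutually tangent equal disks, each contributing $|\t_{r_*}|$. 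Some such global or variational control over the shape of the polygon is precisely what is missing from your per-ear argument.
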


\proof
We search for a configuration of the disks $D _1, \dots , D _m$ which minimizes  the area of $V$.
The existence of an optimal configuration  is immediate, since we deal with a finite-dimensional problem.  However, since the constraints are not closed,  possibly an optimal configuration is degenerated, meaning it may exhibit  some aligned triple of consecutive  centers ($\angle{P_{i-1}P_iP_{i+1}} = \pi$)
and/or  some  touching non-consecutive discs ($d ( P _i , P _j)  =r _i + r _j$  with $|i-j| \ge 2$). 

The statement will be obtained by induction on $m$.

\begin{figure}[ht]
\begin{center}
\includegraphics[scale=0.4]{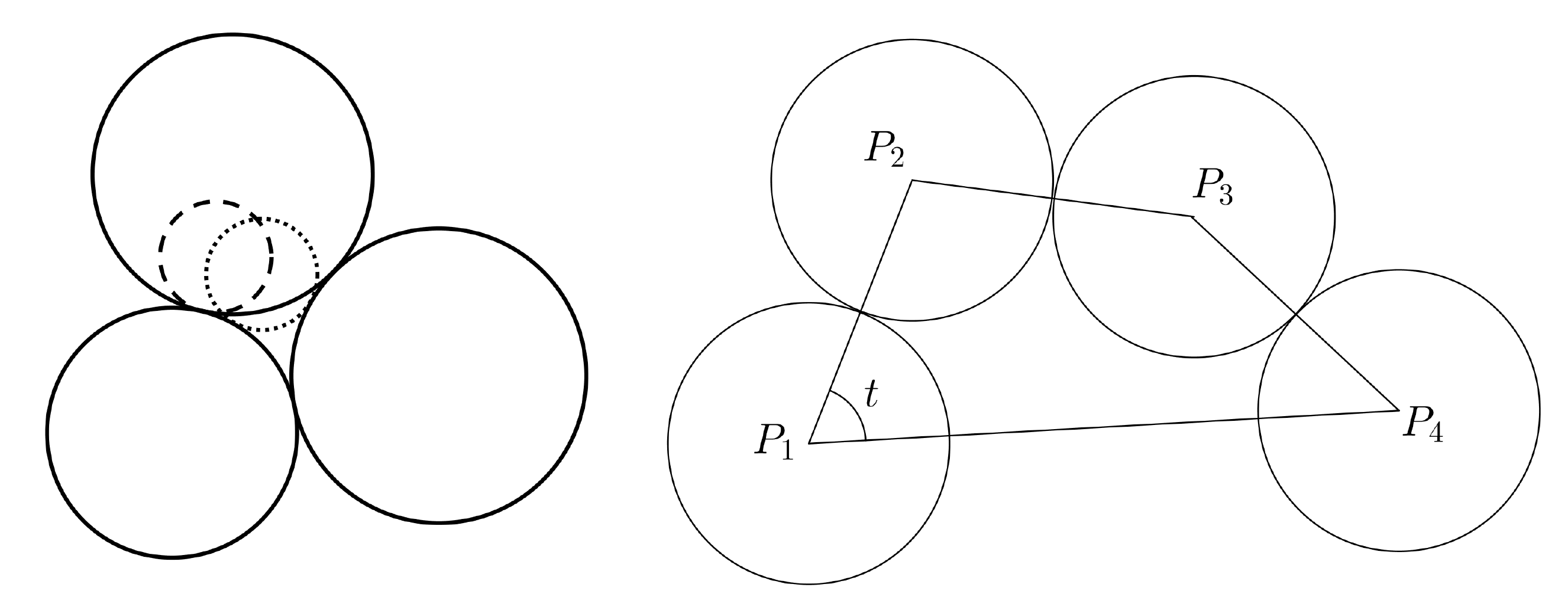}
\caption{Proof of Lemma \ref{p:triangle1}: initial step (left) and induction step (right)}
\label{fig:a}
\end{center}
\end{figure}

\underbar{Initial step.}  
Let $m = 3$. We have to show that the area of a curvilinear triangle bounded by three concave arcs of circle of radii $r _1, r _2, r _3$
is minimal when the three radii are equal. Let us show that, if one of the three radii, say $r_2$, is strictly larger than $r_*$, we can perturb the configuration of the three disks $\{D_1, D_2, D_3\}$  so to decrease the measure of the bounded connected component $V$ of $\R ^ 2 \setminus \big ( D_1 \cup D _ 2  \cup D_3  \big )$. The perturbation we consider is the following one: we keep $D_1$ and $D_3$ fixed, and we change $D_2$ into a new disk  $\widehat D _2$ which has radius $\widehat r _2$ strictly smaller than $r _2$ and is tangent to $D_1$ and $D_3$. 
Denoting by $\widehat V  $ the 
the bounded connected component  of $\R ^ 2 \setminus \big ( D_1 \cup \widehat D_2  \cup D _ 3  \big )$, we claim that the inclusion $\widehat V \subset V$ holds. 
Indeed, if we choose a system of coordinates so that $P_1 = (0, 0)$, and $P_3 = ( r_1 + r _3, 0)$,  we have  $P _2 = (x_0, y _0)$, with 
\begin{equation}\label{P2} \begin{array}{ll}
& \displaystyle x_0 = \frac{r_1^2+(r_1+r_3)^2+2 r_1
   r_2-r_3^2-2
   r_3 r_2}{2
   (r_1+r_3)}
\\ \noalign{\bigskip} & \displaystyle y _0 =  \sqrt{(r_1+r_2)^2-\frac{\left((r_1-
r_3) (r_1+r_3+2
   r_2)+(r_1+r_3)^2\right)^2}{4
   (r_1+r_3)^2}}\,.
   \end{array} 
   \end{equation}
   The geometry is represented in Figure \ref{fig:a}, left.

The derivatives of the angles 
$$\theta _1:= \angle P _2 P _1 P_3 = \arctan\Big ( \frac{y_0}{x_0} \Big ) \quad \text { and } \quad \theta _3:= \angle P _2 P _3 P_1 =  \arctan\Big ( \frac{y_0}{r_1+r_3-x_0} \Big ) $$ 
with respect to $r _2$ are positive, since they are easily computed as
\begin{equation}\label{derivatives}\begin{array}{ll}
& \displaystyle \frac{\partial \theta _1}{\partial r _2} =  \frac{2 r_1
   r_3}{(r_1+
   r_3) (r_1+r_2)
   \sqrt{-\frac{r_1
   r_3
   \left((r_1+r_3)^2
   -(r_1+r_3+2
   r_2)^2\right)}{(r_1+r_3)^2}}}
\\  \noalign{\bigskip}
& \displaystyle \frac{\partial \theta _3}{\partial r _2} = 
\frac{2 r _1 r_3}{
(r_1+r_3)
   (r_2+r_3)
   \sqrt{-\frac{r_1
   r_3
   \left((r_1+r_3)^2
   -(r_1+r_3+2
   r_2)^2\right)}{(r_1+r_3)^2}}}\,.  
\end{array}
\end{equation}
The inequalities $\frac{\partial \theta _1}{ \partial r _2}>0$ and  $\frac{\partial \theta _3}{ \partial r _2}>0$  imply the inclusion 
$\widehat V  \subset V$. In fact,  the following simple geometric argument shows that  $\partial D _2 \cap \partial \widehat  D _2 \cap \partial \widehat V  = \emptyset$. Let a disk of radius  $\widehat r _2$ roll from the position when it is externally tangent to $D _1$ at its tangency point with $D_2$, to the final position when it agrees with $\widehat D_2$. During this movement the intersection points between the boundary of the rolling disk and $\partial D _2$ are: $1$ point at the initial time, then $2$ points,  and eventually $2$, $1$ or $0$ points at the final time, all these intersections lying outside $\widehat V$.  In any case, at the final time no intersection point can belong to $\partial \widehat V$, {\it i.e.} $\partial D _2 \cap \partial \widehat  D _2 \cap \partial \widehat V  = \emptyset$.

\bigskip
\underbar{Induction step.} Assume the statement  holds true for up to $m-1$ disks, and let us show it holds true also for $m$ disks. 
Two cases may occur for an optimal configuration of $m$ disks. 

\smallskip {\it Case 1} : $d (P _i, P _j) = r _ i + r _j$ for some $i, j$ with $j \neq i +1$ (equivalently, $V$ is disconnected). With no loss of generality, let $i = 1$ and $ 2<j < m$. Consider the two disjoint families of disks
$\mathcal F':= \{ D _1, \dots, D _j \}$ and $\mathcal F '' := \{D_j,  D _{j+1} , \dots D _m , D_1\}$. They  have cardinalities $j$ and $m+2-j$, both strictly smaller than $m$. Hence,  letting $V '$ and $V ''$ be respectively the complements of the unbounded connected components of 
$\R ^ 2 \setminus \cup _{D_i \in \mathcal F'}  D _i$ and $\R ^ 2 \setminus \cup _{D_i \in \mathcal F''}  D _i$, by induction it holds
$$|V  ' |  \geq (j-2) |\t_{r_*} | \qquad \text { and } \qquad |V  '' |  \geq (m+2-j-2) |\t_{r_*}| =(m-j) |\t_{r_*}|\,.$$ 
Since by construction $V ' \cap  V '' = \emptyset$, and 
$V = V' \cup  V ''$, we obtain that $|V| \geq (m-2) |\t_{r_*}|$. 
This concludes the proof in Case 1.   

\smallskip {\it Case 2} : $d (P _i, P _j) > r _ i + r _j$ for all $i, j$ with $j \neq i +1$ (equivalently, $V$ is connected).
 We start by proving the following claim:   
\begin{equation}\label{equalradii}
r_i= r _*  \qquad \forall i = 1, \dots, m\,.
\end{equation} 
Namely, let us we show that, if one of the radii $r_1, \dots, r_m$ is strictly larger than $r_*$, we can perturb the configuration of the disks $\{D_1, \dots, D_m\}$  so to decrease the measure of  $V$. The perturbation we use is similar as the one considered in the initial step:  assuming without loss of generality that $r _2< r _*$, we keep all the disks fixed except $D _2$, and we change $D_2$ into a new disk  $\widehat D _2$ which has radius $\widehat r _2$ strictly smaller than $r _2$ and is tangent to $D_{1}$ and $D_3$. We remark that such a disk $\widehat D_2$ exists because  by assumption the centers $P_1$, $P_2$, and $P_3$ of the three involved disks are not aligned. Notice also that the perturbation we are considering is admissible because, in the case 2 we are dealing with, it holds $d (P_2, P _j) > r _ 2 + r _j$ for all $j \neq 1, 3$, which in particular ensures that the new configuration still satisfies the assumptions of the proposition. 
Denoting by $\widehat V $  the bounded connected component  of $\R ^ 2 \setminus \big ( D_1 \cup \widehat D_2\cup D _ 3    \big )$,  we claim that the inclusion $\widehat V \subset V$ holds.  The proof is similar as in the initial step. 
We choose a system of coordinates so that $P_1 = (0, 0)$, and $P_3 = (l , 0)$,  with $l >r_1+ r_3$. Accordingly, equations \eqref{P2} and \eqref{derivatives} are now replaced by  
\begin{equation}\label{P2.bis} \begin{array}{ll}
& \displaystyle x_0 = 
\frac{l^2+r_1^2+2
   r_1
   r_2- r_3^2-2
   r_3 r_2}{2 l}
\\ \noalign{\bigskip} & \displaystyle y _0 = \sqrt{(r_1+ r_2)^2-\frac{\left(l^2+(r_1-r_3) (r_1+ r_3+2
   r_2)\right)^2}{4 l^2}}
\,.
   \end{array} 
   \end{equation}
   and
   \begin{equation}\label{derivatives2}\begin{array}{ll}
& \displaystyle \frac{\partial \theta _1}{\partial r _2} = \frac{(l+r_1-r_3)
   (l-r_1+ r_3)}{l
   (r_1+r_2)
   \sqrt{-\frac{(l+r_1-r_3)
   (l-r_1+ r_3)
   \left(l^2-(r_1+ r_3
   +2
   r_2)^2\right)}{l^2}}} \\  \noalign{\bigskip}
& \displaystyle \frac{\partial \theta _3}{\partial r _2} =\frac{(l+r_1-r_3)
   (l-r_1+ r_3)}{l
   (r_3+r_2)
   \sqrt{-\frac{(l+r_1-r_3)
   (l-r_1+r_3)
   \left(l^2-(r_1+r_3+2
   r_2)^2\right)}{l^2}}}\,.
\end{array}
\end{equation}
Since the above derivative are positive, the  inclusion $\widehat V \subset V$ can be obtained as in the initial step, and the proof of \eqref{equalradii} is concluded. 

 To achieve our proof in case 2, it remains to show that a contradiction is reached as soon as we have $m \geq 4$.  Since we have proved condition \eqref{equalradii}, 
we are reduced to show the following assertion: given a number $m\geq 4$ of disks $D_1, \dots, D_m$  with equal radius $r_*$, and  centers $P_1, \dots , P _m$ such that $d (P_i, P_{i+1}) = 2 r_*$, $d (P _i, P _j) > 2r _ *$ if $j \neq i +1$, it is possible to perturb their configuration so to decrease the area of $V$. The perturbation we consider consists in keeping $D_1$ and $D_4$ fixed, and moving just $D_2$ and $D_3$, so that they remain tangent to each other and to $D_1$, $D_4$ respectively. Notice that such perturbation is  admissible because $d (P_2, P_j) > 2 r _*$ for all $j \neq 1, 3$ and similarly $d (P_3, P_j) > 2 r _*$ for all $j \neq 2, 4$.  Since \eqref{equalradii} holds, showing that the area of $V$ decreases is equivalent to showing that the area of the quadrilateral with vertices $P_1, P_2, P_3, P_4$ decreases. Assume without loss of generality that $2 r _*= 1$, and let $l$ the distance between $P _1$ and $P _4$. We have $l \geq 1$, with equality if $m = 4$. 
We name $t$ the angle formed by the side of length $l$ and one of its adjacent sides, see Figure \ref{fig:a}, right.

By the assumption on the angles $\angle{P_{i-1}P_iP_{i+1}}$, our quadrilateral is convex, and its area is given by
$$\varphi (t):= \frac{1}{4} \sqrt{\left(l^2-2 l
   \cos t+1\right)
   \left(-l^2+2 l \cos
   t+3\right)}+\frac{1}{2} l
   \sin t\,.$$
   
   We have
$$\varphi' (t) = \frac{1}{2} l \left(\frac{\sin
   t \left(-l^2+2 l \cos
   t+1\right)}{\sqrt{\left(l^
   2-2 l \cos t+1\right)
   \left(-l^2+2 l \cos
   t+3\right)}}+\cos
   t\right)\,.$$

Hence the inequality $\varphi' (t) \geq 0$  is equivalent to
$$\cos ^2t \left(l^2-2 l \cos
   t+1\right) \left(-l^2+2 l
   \cos t+3\right)-\sin ^2t
   \left(-l^2+2 l \cos
   t+1\right)^2 \geq 0\,, $$
and, in turn, to
$$\left(1-l^2\right) \left(4 \cos^2 t  - 4 l \cos t  + l^2 - 1\right) \geq 0\,. $$
   Taking into account that $l \geq 1$, we have   $\varphi' (t) \geq 0 $ if and only if 
   $$4 \cos^2 t  - 4 l \cos t  + l^2 - 1 \leq 0\,.$$ 
   The above inequality is satisfied on the interval $I _l:= \big [ 0, \arccos\big ( \frac{l-1}{2} \big) \big ] $. Indeed, setting $y := \cos t$, the  roots of the polynomial  $p _l (y):=4 y ^ 2 - 4l y + l ^ 2 -1 $ are $\frac{ l \pm 1}{2}$, so that $p _l (y) \leq 0$ on the interval $\big [ \frac{l-1}{2},  \frac{l+1}{2}\big ]$, which contains $I _l$   (because $1 \leq \frac{l+1}{2}$).

   Therefore,  the minimum of $\varphi (t)$ is achieved as $t \to 0$,  so that no nondegenerate quadrilateral can be optimal, 
   and our proof is achieved.  \qed

\begin{lemma}\label{p:triangle2}
Let $D_1, \dots, D _m$ be a family of $m \geq 3$ disks of centers $P _1, \dots, P _m$ and radii $r_1, \dots, r _m$, contained into a half-plane $H$ delimited by
a straight line tangent to $D_1$ and $D_m$,
such that  
$$\begin{array}{lll} 
& \displaystyle d (P _i, P _{i+1} ) = r _i + r _{i+1}  & \forall i = 1, \dots, m-1
\\
\noalign{\medskip}
& \displaystyle d ( P _i , P _j)  > r _i + r _j & \forall i, j \in \{ 1, \dots, m \}, \ { |i-j| \ge 2}
\\
\noalign{\medskip}
& \angle{P_{i-1}P_iP_{i+1}} < \pi & \forall i = 1, \dots, m 
\end{array}
$$
(where $P_0, P_{m+1}$ are the orthogonal projections on $\R ^ 2 \setminus H$ of $P_1, P_m$).

Setting  
$V$  the complement in $H$ of the unbounded connected component of $H \setminus \cup _{i=1} ^ m D _i $, and $r _*:= \min \{r_1, \dots , r _m \}$,  
there holds
$$|V| \geq (m-2) |\t_{r_*}| + |\wt _{r_*}|   (\geq (m-1) |\t_{r_*}| )  \, ;$$
\end{lemma}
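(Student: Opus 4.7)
The proof proceeds by induction on $m$, paralleling the strategy of Lemma \ref{p:triangle1}, with the bounding line of $H$ playing a role akin to an additional ``virtual disk'' in the closed-chain setting.

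For the base case $m=3$, the configuration consists of three disks with $D_1, D_3$ tangent to the bounding line and $D_2$ tangent to both. Arguing exactly as in the initial step of the proof of Lemma \ref{p:triangle1}, a perturbation shrinking the radius of any disk larger than $r_*$ reduces $|V|$, so we may assume all three radii equal $r_*$. The remaining one-parameter family, indexed by $a := \tfrac 12 d(P_1, P_3) \in [r_*, 2 r_*]$, interpolates between the degenerate limit $a = r_*$ (where $D_1$ and $D_3$ become mutually tangent at a common top point, and $V$ splits into an upper curvilinear triangle bounded by three mutually tangent disks of radius $r_*$, which is precisely $\t_{r_*}$, and a lower region bounded by the line and two tangent disks of radius $r_*$ both tangent to the line, which is precisely $\wt_{r_*}$) and the configuration $a = 2r_*$ (where $D_2$ is also tangent to the line, giving $|V| = 2|\wt_{r_*}|$). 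A direct integral computation, or equivalently a perturbation showing that decreasing $a$ strictly decreases $|V|$, identifies the minimum at $a = r_*$ and yields $|V| \geq |\t_{r_*}| + |\wt_{r_*}|$.

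For the inductive step, consider a minimizing configuration. If $V$ is disconnected, then two non-consecutive disks $D_i, D_j$ (with $j \geq i+2$) must be mutually tangent, and this tangency splits $V$ into an inner component bounded by the closed sub-chain $D_i, D_{i+1}, \dots, D_j$ (to which Lemma \ref{p:triangle1} gives area $\geq (j-i-1)|\t_{r_*}|$) and an outer component bounded by the open sub-chain $D_1, \dots, D_i, D_j, \dots, D_m$ together with the line (to which the inductive hypothesis gives area $\geq (m+i-j-1)|\t_{r_*}| + |\wt_{r_*}|$). In the extremal case $i=1, j=m$, the outer component degenerates to a lens bounded by the line and two equal mutually tangent disks of radius $r_1 = r_m \geq r_*$, both tangent to the line, whose area equals $|\wt_{r_1}| \geq |\wt_{r_*}|$ by direct computation. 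Summing the contributions yields $|V| \geq (m-2)|\t_{r_*}| + |\wt_{r_*}|$. If instead $V$ is connected, perturbation arguments analogous to those in Case 2 of the proof of Lemma \ref{p:triangle1}, adapted so as to keep $D_1$ and $D_m$ fixed (preserving their tangency to the line), show that all radii must equal $r_*$ and that for $m \geq 4$ the intermediate disks can be further perturbed, while preserving admissibility, so as to strictly decrease $|V|$, contradicting minimality. Hence $m=3$ in this case, reducing to the base case.

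I anticipate the main obstacle in the monotonicity claim within the base case: while the geometric intuition strongly suggests that $|V(a)|$ is strictly increasing on $(r_*, 2r_*]$ (as $D_1, D_3$ are pushed apart along the line, both the lower region and the upper curvilinear triangle grow), making this rigorous requires either an explicit integral formula for $|V(a)|$ followed by sign analysis of $|V|'(a)$, or a careful cut-and-paste argument exhibiting embedded disjoint copies of $\t_{r_*}$ and $\wt_{r_*}$ inside any $V(a)$. Once the base case is settled, the inductive step amounts to routine bookkeeping of the splitting of $V$ at the forced tangency of two non-consecutive disks, and the perturbation analysis in the connected case transfers almost verbatim from Lemma \ref{p:triangle1} since the constraint that $D_1, D_m$ remain tangent to the line only removes two degrees of freedom, still leaving enough to run the quadrilateral argument for $m \geq 4$.
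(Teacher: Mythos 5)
There is a genuine gap in your base case, and it is exactly at the point you flagged as the main obstacle: the function $a\mapsto |V(a)|$ is \emph{not} monotone increasing on $[r_*,2r_*]$, so no perturbation argument ``decreasing $a$ strictly decreases $|V|$'' can exist. The paper's computation makes this explicit: after reducing to equal radii, with $2r_*=1$ and $t:=\angle P_3P_1P_2\in[0,\pi/3]$ (so $t=\pi/3$ is your $a=r_*$ and $t=0$ is your $a=2r_*$), the relevant area is $\varphi(t)=\cos t\,(1+\sin t)$ up to an additive constant (the circular sectors cut off by the pentagon $P_0P_1P_2P_3P_4$ always sum to a full disk). One has $\varphi'\ge 0$ precisely on $[0,\pi/6]$, so $|V(a)|$ increases from $a=r_*$ up to $a=\sqrt 3\,r_*$ and then \emph{decreases} down to $a=2r_*$. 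The minimum is therefore attained at an endpoint, and the conclusion requires the endpoint comparison $\varphi(\pi/3)<\varphi(0)$, i.e.\ $|\t_{r_*}|+|\wt_{r_*}|<2|\wt_{r_*}|$, which holds only because $|\t_{r_*}|=\tfrac{r_*^2}{2}(2\sqrt3-\pi)<(2-\tfrac\pi2)r_*^2=|\wt_{r_*}|$. This numerical comparison is an essential ingredient that your monotonicity heuristic (``both regions grow as $D_1,D_3$ are pushed apart'') cannot supply; the heuristic is simply false once $D_2$ gets close to the bounding line.

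A second, smaller gap is in the inductive step: $V$ can become disconnected not only because two non-consecutive disks touch, but also because an \emph{intermediate} disk $D_i$ ($1<i<m$) becomes tangent to the bounding line of $H$. The paper treats this separately (its Case 1a), splitting the family into two open chains each satisfying the hypotheses of the present lemma, and the bookkeeping there uses $|\wt_{r_*}|\ge|\t_{r_*}|$ to absorb the extra $|\wt_{r_*}|$. Your splitting at a disk--disk tangency alone does not cover this degeneration, and in your extremal subcase $i=1$, $j=m$ the assertion $r_1=r_m$ is unjustified (two externally tangent disks both tangent to a line need not be congruent). Finally, note that for connected $V$ the paper does not run a quadrilateral/contradiction argument at all: it peels off a tangent line to $D_1$ and $D_{m-1}$ and applies the inductive hypothesis to $V\cap\widetilde H$ while exhibiting a copy of $\t_{r_*}$ in the complement; your proposed transfer of the Lemma~\ref{p:triangle1} perturbation is plausible but would need the admissibility checks relative to the half-plane constraint spelled out.
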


\bigskip

\begin{figure}[ht]
\begin{center}
\includegraphics[scale=0.7]{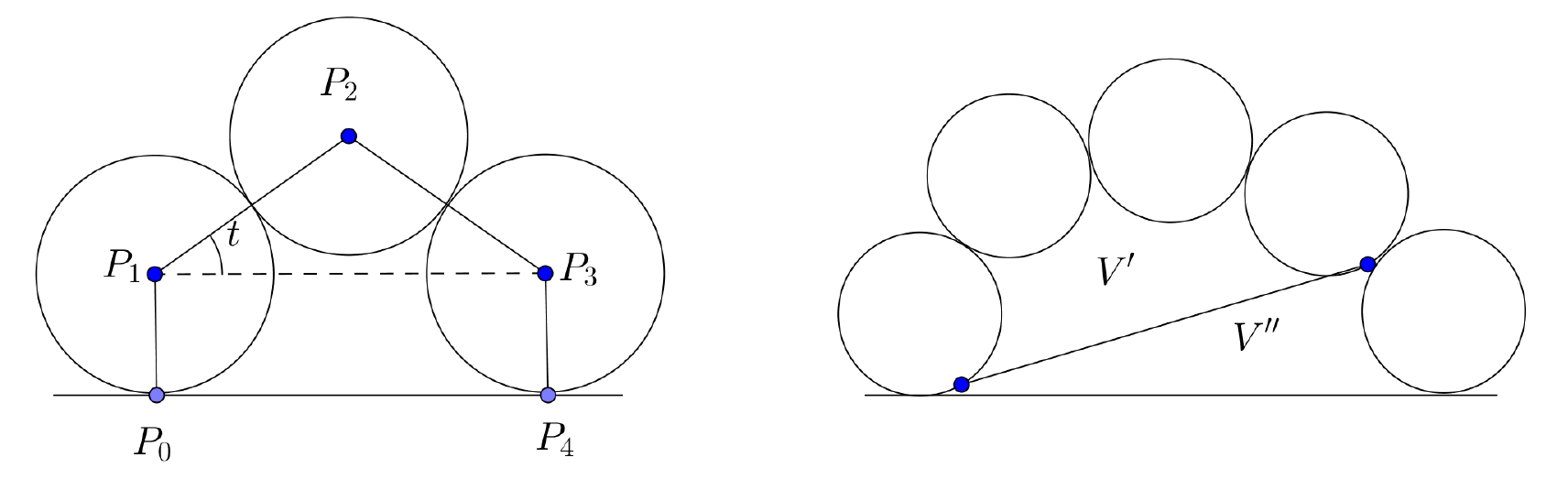}
\caption{Proof of Lemma \ref{p:triangle2}: initial step (left) and induction step (right)}
\label{fig:b}
\end{center}
\end{figure}

\proof 
Similarly as in the proof of Lemma \ref{p:triangle2}, we search for a (possibly degenerated) optimal configuration of the disks $D_1, \dots, D _m$ which minimizes the area of $V$, and we argue by induction on $m$.  

\smallskip
\underbar{Initial step.}  
Let $m = 3$. Let us show that the area of the region $V$  bounded by three concave arcs lying on disks $D_1, D_2, D_3$ with $(D_1, D_2)$ and $(D_2, D_3)$ mutually tangent, and a straight line $\gamma$ tangent to both $D_1$ and $D_3$, is minimal when the three radii are equal to $r_*$, and $V$ is the (disjoint) union $\t _{r_*} \cup \wt _{r_*}$. The fact that the three radii must be equal can be proved in the very same way as done in the initial step of the proof of Lemma \ref{p:triangle1}. 
Then we are reduced to minimize the area of the pentagon $P_0 P_1 P_2 P_3 P_4$ represented in Figure \ref{fig:b}, left.
Assuming without loss of generality that $2 r_* = 1$, and setting  $t:= \angle P_3 P_1 P_2$,  the area of such pentagon is given by
$$\varphi (t) := \cos t \, (1 + \sin t ) \,, \qquad t \in \big [0, \frac{\pi}{3} \big ]\,.$$ 
Then it is immediate to see that $\varphi' (t) \geq 0$ if and only if $t \in \big [0, \frac{\pi}{6} \big ] $, so that the minimum of $\varphi$ on the interval 
$\big [0, \frac{\pi}{3} \big ]$ is equal to
$$\min \big \{ \varphi (0), \varphi \big (\frac{\pi}{3} \big ) \big \} = \varphi \big (\frac{\pi}{3} \big )   = |\t _{r_*}| +  |\wt _{r_*}|\,.$$

\smallskip
\underbar{Induction step.}  
Assume the statement  holds true for up to $m-1$ disks, and let us show it holds true also for $m$ disks. Two cases may occur for an optimal configuration of $m$ disks.

\smallskip 
{\it Case 1}: $V$ is disconnected. Two subcases may occur:

\smallskip
{\it Case 1a}: The family $\{D_1, \dots D_k\}$ can be decomposed as the union of 
two disjoint subfamilies $\mathcal F'$ and $\mathcal F ''$, of cardinalities $j$ and $m+1-j$ (both strictly smaller than $m$), both satisfying the assumptions of Lemma \ref{p:triangle2}. In this case,   letting $V '$ and $V ''$ be respectively the complements  in $H$ of the unbounded connected components of 
$H \setminus \cup _{D_i \in \mathcal F'}  D _i$ and $H\setminus \cup _{D_i \in \mathcal F''}  D _i$, by induction it holds
$$|V  ' |  \geq (j-2) |\t_{r_*} | + |\wt_{r_*} |  \quad \text { and } \quad |V  '' |  \geq (m-j-1) |\t_{r_*}|+ |\wt_{r_*} |  \,.$$ 
Since by construction $V ' \cap  V '' = \emptyset$, and 
$V = V ' \cup  V ''$, we obtain 
 $$|V| \geq (m-3) |\t_{r_*}| + 2  |\wt_{r_*} | \geq (m-2) |\t_{r_*}|+ |\wt_{r_*} |  \,.$$ 

\smallskip
{\it Case 1b}: The family $\{D_1, \dots D_k\}$ can be decomposed as the union of 
two disjoint subfamilies $\mathcal F'$ and $\mathcal F ''$, of cardinalities $j$ and $m+2-j$ (both strictly smaller than $m$), such that one on them, say $\mathcal F'$, satisfies the assumptions of Lemma \ref{p:triangle1}, and the other one satisfies the assumptions of Lemma \ref{p:triangle2}. In this case,   letting $V '$ and $V ''$ be respectively the complements  in $H$ of the unbounded connected components of 
$H \setminus \cup _{D_i \in \mathcal F'}  D _i$ and $H\setminus \cup _{D_i \in \mathcal F''}  D _i$, by Lemma \ref{p:triangle2} and induction, it holds
$$|V  ' |  \geq (j-2) |\t_{r_*} |   \quad \text { and } \quad |V  '' |  \geq (m-j) |\t_{r_*}|+ |\wt_{r_*} |  \,.$$ 
Since by construction $V ' \cap  V '' = \emptyset$, and 
$V = V ' \cup  V ''$, we obtain 
 $$|V| \geq  (m-2) |\t_{r_*}|+ |\wt_{r_*} |  \,,$$ 

The proof of the induction step in Case 1 is concluded. 

\smallskip {\it Case 2}:   $V$ is connected. In this case, we  preliminary observe that the equality \eqref{equalradii} must be satisfied, otherwise the configuration cannot be optimal 
(the proof is exactly the same as in Case 2 of the induction step in the proof of Lemma \ref{p:triangle1}). 
Then, we 
consider a straight line tangent to both $D_1$ and $D _{m-1}$ such that $D_1, \dots,  D_{m-1}$ are contained into a half-plane $\widetilde H$ delimited by such line. We set  $V ' := V  \cap \widetilde H$ and $V '' := V  \cap (\R ^2 \setminus \widetilde H)$, see Figure \ref{fig:b}, right. By induction, we have $|V '| \geq (m-3) |\t _{r_*}|+|\wt_{r_*}|$. On the other hand, we observe that $V ''$ contains a copy of $\t_{r_*}$ (with strict inclusion, since we are dealing with case 2); hence we have $|V ''|  \geq |\t _{r_*}| $.   Since by construction $V ' \cap  V '' = \emptyset$, and 
$V = V ' \cup  V ''$, we obtain that $|V| \geq (m-2) |\t_{r_*}| +  |\wt_{r_*}|$, 
concluding the proof of the induction step also in Case 2.   

\begin{lemma}\label{p:triangle3}
Let $D_1, \dots, D _m$ be a family of $m \geq 3$ disks of centers $P _1, \dots, P _m$ and radii $r_1, \dots, r _m$, 
contained into a sector $S$ of opening angle $\pi/3$ delimited by two half lines tangent respectively to $D_1$ and $D_m$,  such that
$$\begin{array}{lll} 
& \displaystyle d (P _i, P _{i+1} ) = r _i + r _{i+1}  & \forall i = 1, \dots, m-1
\\
\noalign{\medskip}
& \displaystyle d ( P _i , P _j)  > r _i + r _j &  \forall i, j \in \{ 1, \dots, m \}, \ |i-j| \geq 2
\\
\noalign{\medskip}
& \angle{P_{i-1}P_iP_{i+1}}  < \pi & \forall i = 1, \dots, m
\end{array}
$$
(where $P_0, P_{m+1}$ are the orthogonal projections on $\R ^ 2 \setminus Q$ of $P_1, P_m$).

Setting $V$  the complement in $S$ of the unbounded connected component of $S \setminus \cup _{i=1} ^ m D _i $, and $r _*:= \min \{r_1, \dots , r _m \}$,  
there holds
$$|V| \geq (m-2) |\t_{r_*}| + |\wt _{r_*}|  + |\wwt_{r_*}| \geq (m-1) |\t_{r_*}|+ |\wwt_{r_*}| \, .$$
\end{lemma}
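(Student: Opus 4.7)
The plan is to follow closely the inductive strategy of Lemma~\ref{p:triangle2}, arguing by induction on the number $m$ of disks. As in the previous two lemmas, the key ingredient is a perturbation argument forcing any optimal non-degenerate configuration to have $r_1=\cdots=r_m=r_*$, combined with a case analysis according to whether the region $V$ is connected or disconnected. The extra term $|\widehat{\widehat\Delta}_{r_*}|$ in the bound accounts for the sector vertex, while $|\widehat\Delta_{r_*}|$ captures the transition between the chain and the corner.

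\textbf{Initial step ($m=3$).} First apply the shrinking perturbation used in the initial step of Lemma~\ref{p:triangle1}: if some $r_i>r_*$, replace $D_i$ by a smaller disk $\widehat D_i$ tangent to its neighbours, strictly decreasing $|V|$ while preserving the assumptions. This forces $r_1=r_2=r_3=r_*$ at an optimum. With equal radii, $|V|$ becomes an explicit function of one angular parameter $t$ describing how the chain sits inside the sector, of a form analogous to the function $\varphi(t)=\cos t\,(1+\sin t)$ used in the initial step of Lemma~\ref{p:triangle2}. An elementary one-variable minimization shows that the minimum is attained when the chain is symmetric with respect to the bisector of the sector, with value $|\Delta_{r_*}|+|\widehat\Delta_{r_*}|+|\widehat{\widehat\Delta}_{r_*}|$.

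\textbf{Induction step.} Assume the claim for all sector-chains of at most $m-1$ disks. If $V$ is disconnected, then exactly as in Cases~1a and~1b of the proof of Lemma~\ref{p:triangle2}, the family $\{D_1,\dots,D_m\}$ splits into two consecutive subfamilies $\mathcal F',\mathcal F''$ of cardinalities $j$ and $m+1-j$ (or $m+2-j$), each satisfying the hypotheses of one among Lemmas~\ref{p:triangle1}, \ref{p:triangle2}, or the present lemma (with strictly fewer disks). Applying the appropriate bound to each piece and summing yields at least the required $(m-2)|\Delta_{r_*}|+|\widehat\Delta_{r_*}|+|\widehat{\widehat\Delta}_{r_*}|$, with slack arising from the doubled $|\widehat\Delta_{r_*}|$ or $|\widehat{\widehat\Delta}_{r_*}|$ contributions. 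If $V$ is connected, the perturbation argument of Case~2 of Lemma~\ref{p:triangle1} again forces $r_1=\cdots=r_m=r_*$. One then decomposes $V$ into two disjoint pieces by a straight line $\ell$ tangent to both $D_1$ and $D_2$ at their common tangency point: the ``corner piece'' $V'$ containing the vertex of $S$ and $D_1$, and the ``tail piece'' $V''$ containing $D_2,\dots,D_m$. A direct geometric inspection shows that $V'$ contains a region congruent to $\widehat{\widehat\Delta}_{r_*}\cup\widehat\Delta_{r_*}$ (up to a set of measure zero), hence $|V'|\ge|\widehat{\widehat\Delta}_{r_*}|+|\widehat\Delta_{r_*}|$; meanwhile $V''$ satisfies the hypotheses of Lemma~\ref{p:triangle2} with $m-1$ disks in the half-plane bounded by $\ell$, whence $|V''|\ge(m-3)|\Delta_{r_*}|+|\widehat\Delta_{r_*}|$. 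Summing and using the elementary inequality $|\widehat\Delta_{r_*}|\ge|\Delta_{r_*}|$ (readily verified from the explicit values $|\Delta_{r_*}|=\tfrac{r_*^2}{2}(2\sqrt3-\pi)$ and $|\widehat\Delta_{r_*}|=r_*^2(2-\tfrac{\pi}{2})$) yields the desired bound.

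The most delicate point I expect to be the geometric identification in the connected case: verifying that the corner piece $V'$ genuinely contains a set congruent to $\widehat{\widehat\Delta}_{r_*}\cup\widehat\Delta_{r_*}$, and that the tail piece $V''$ honestly fits into the Lemma~\ref{p:triangle2} framework with $D_2$ tangent to the cutting line $\ell$ and sector side~2 playing the role of the tangent line for $D_m$. This demands careful tracking of the angles at which $\ell$ meets the two sides of the sector, together with a check that these angles are compatible with the definitions of $\widehat\Delta_{r_*}$ and $\widehat{\widehat\Delta}_{r_*}$, which is exactly where the sector opening angle $\pi/3$ enters in a non-trivial way.
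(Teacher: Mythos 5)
Your overall architecture (induction on $m$, the equal-radii perturbation, the connected/disconnected dichotomy, and the splitting into subfamilies when $V$ is disconnected) coincides with the paper's. The connected case of the induction step, however, contains a genuine gap. You cut $V$ along the common tangent $\ell$ at the tangency point of $D_1$ and $D_2$ and want to apply Lemma \ref{p:triangle2} to the tail $D_2,\dots,D_m$. But Lemma \ref{p:triangle2} requires a \emph{single} straight line, tangent to both extreme disks of the chain, delimiting a half-plane that contains all of them; your tail piece is instead pinched between two different, non-parallel lines ($\ell$, tangent to $D_2$, and the second side of $S$, tangent to $D_m$), i.e.\ it is a chain in a wedge of uncontrolled opening angle, a configuration covered by none of Lemmas \ref{p:triangle1}--\ref{p:triangle3}. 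You flag this as ``delicate'', but it is not a matter of tracking angles: the hypotheses of Lemma \ref{p:triangle2} simply fail. The paper peels off the \emph{last} disk instead: it builds a new sector $\widetilde S$ of opening exactly $\pi/3$, bounded by the original tangent to $D_1$ and a new half-line tangent to $D_{m-1}$, applies the induction hypothesis of the present lemma to $D_1,\dots,D_{m-1}$ inside $\widetilde S$ (yielding $(m-3)|\t_{r_*}|+|\wt_{r_*}|+|\wwt_{r_*}|$), and checks that $V\setminus\widetilde S$ still contains a copy of $\t_{r_*}$. Remaining within the class of $\pi/3$-sectors is precisely what makes the induction close; your cut leaves it.

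The initial step $m=3$ is also not as you describe. Unlike the half-plane setting of Lemma \ref{p:triangle2}, the sector has a distinguished vertex and hence no translational invariance: after forcing equal radii, a chain of three consecutively tangent disks with $D_1$ tangent to one side and $D_3$ to the other has \emph{two} degrees of freedom, so there is no ``elementary one-variable minimization'' of $|V|$. Moreover, the claimed conclusion (minimum attained at the symmetric chain, with value exactly $|\t_{r_*}|+|\wt_{r_*}|+|\wwt_{r_*}|$) is doubtful: the paper's computation shows that the outer part of $V$ is minimized, with value $|\t_{r_*}|+|\wt_{r_*}|$, when $D_1$ and $D_3$ become mutually tangent, while the corner part is minimized, with value $|\wwt_{r_*}|$, in a different degenerate position; these optima are incompatible, so no single configuration realizes the stated value and a computation asserting equality there cannot be correct. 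The paper avoids the two-variable problem by splitting $V$ with an auxiliary line tangent to $D_1$ and $D_3$, invoking Lemma \ref{p:triangle2} for the outer piece, and reducing the corner piece, after passing to the case $d(P_1,P_3)=r_1+r_3$, to a genuine one-variable pentagon-area minimization. You would need to reorganize the base case along these (or equivalent) lines.
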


\begin{figure}[ht]
\begin{center}
\includegraphics[scale=0.5]{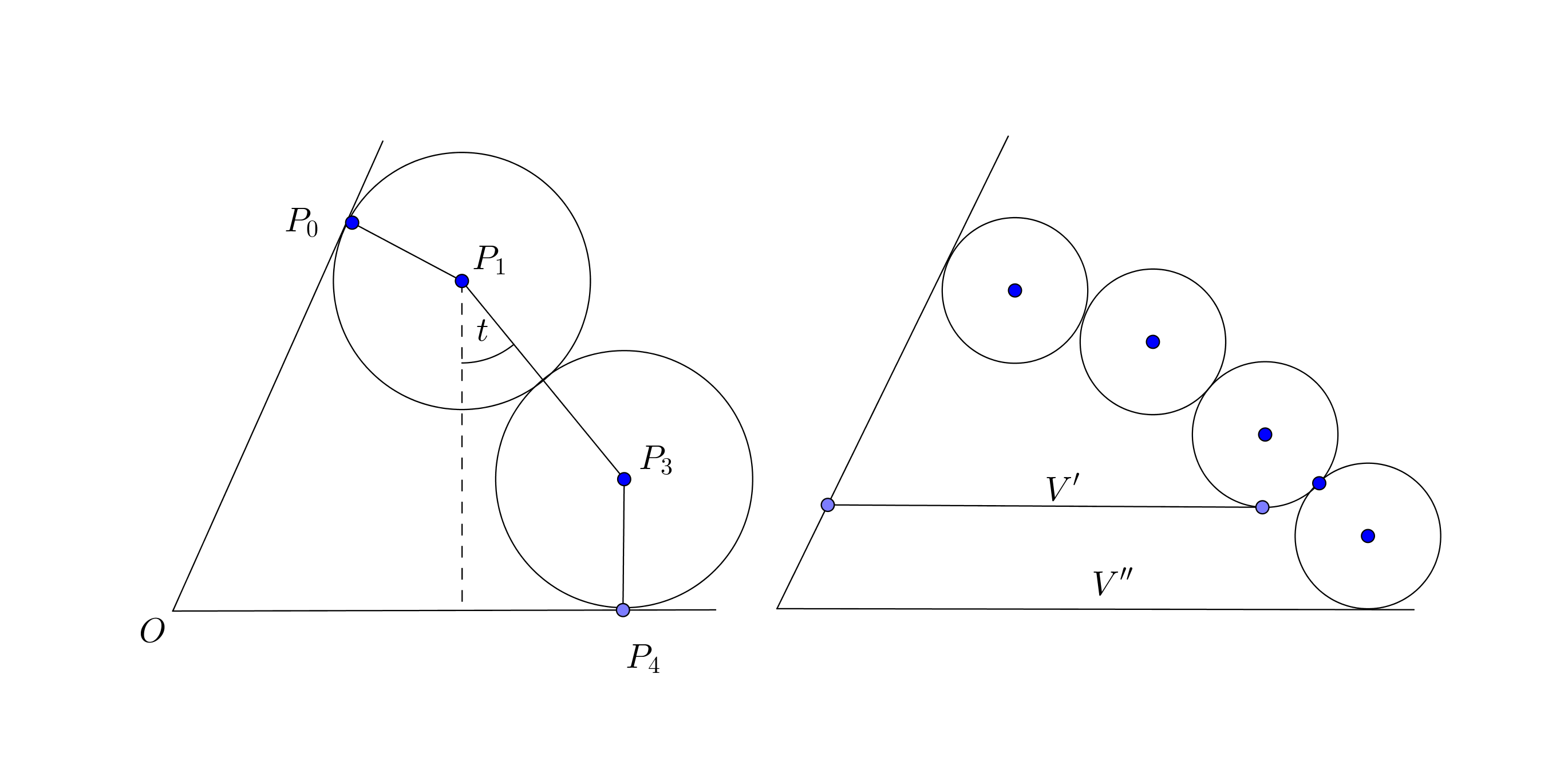}
\caption{Proof of Lemma \ref{p:triangle3}: initial step (left) and induction step (right)}
\label{fig:c}
\end{center}
\end{figure}

\proof

We argue again by induction on $m$. 

\smallskip
\underbar{Initial step.}  
Let $m = 3$. Let us show that the area of the region $V$  bounded by three concave arcs lying on disks $D_1, D_2, D_3$ with $(D_1, D_2)$ and $(D_2, D_3)$ mutually tangent, and two  half-lines forming an angle of $\pi/3$ and tangent respectively to $D_1$ and $D_3$,  is minimal when the three radii are equal to $r_*$, and $V$ is the (disjoint) union $\t _{r_*} \cup \wt _{r_*} \cup \wwt _{r_*}$. The fact that the three radii must be equal can be proved in the usual way as Lemmas \ref{p:triangle1} and \ref{p:triangle2}. 
Then we observe that, since by assumption the angle $\angle P _1 P_2 P_3$ is strictly less than $\pi$, there exists a straight line $\gamma$ tangent to both $D_1$ and $D_3$ such that $D_1, D_2, D_3$ are contained into a halfplane delimited by $\gamma$.  We set $V '$ the region delimited by  our three concave arcs and $\gamma$, and $V '' := V \setminus V '$. By Lemma \ref{p:triangle2}, we have 
$|V'| \geq   |\t _{r_*}|+ |\wt_{r_*}|$, so that we are reduced to show that $|V '' |  \geq  | \wwt _{r_*} |$.
It is not restrictive to prove the latter inequality in the setting when $d(P_1, P_3 ) = r _1 + r _3$ (because in such setting 
$ | V ' |=    |\t _{r_*}|+ |\wt_{r_*}|$,  and $|V ''|$ becomes strictly smaller than in the case when $d(P_1, P_3 ) > r _1 + r _3$).  
 It is readily seen that minimizing $|V''|$ is equivalent to minimizing the area of the pentagon $P_0 P_1 P_3 P_4 O$, being
 $O$ the origin of the two half-lines which delimit $S$, see Figure \ref{fig:c}, left.  
Setting $t:= \angle P_4 P_3 P_1 - \frac{ 2 \pi}{3}$, by elementary computations the area of such pentagon is given by
$$\varphi (t) :=  \frac{r_*^2}{3} \big [ 6 \sin t+3 \sin (2 t)+6
   \sqrt{3} \cos t+\sqrt{3}
   \cos (2 t)+4 \sqrt{3} \big ]  \,, \qquad t \in \big [0, \frac{\pi}{2} \big ]\,, $$ 
and the minimum  of the map $\varphi$ on the interval $ [0, \frac{\pi}{2} \big ]$ is attained at $t = \frac{\pi}{2}$.  This yields $\varphi(t) \geq (2+\sqrt 3) r_* ^ 2$, 
which corresponds to the case $V '' = \wwt _{r_*}$.

\smallskip
\underbar{Induction step.}  
Assume the statement  holds true for up to $m-1$ disks, and let us show it holds true also for $m$ disks. Two cases may occur for an optimal configuration of $m$ disks.

\smallskip 
{\it Case 1}:  $V$ is disconnected. 
Two subcases may occur: 

\smallskip
{\it Case 1a}: The family $\{D_1, \dots D_k\}$ can be decomposed as the union of 
two disjoint subfamilies $\mathcal F'$ and $\mathcal F ''$, of cardinalities $j$ and $m+1-j$ (both strictly smaller than $m$), such that one of them, say $\mathcal F'$, satisfies the assumptions of Lemma \ref{p:triangle2}, and the other one, say $\mathcal F''$ satisfies the assumptions of Lemma \ref{p:triangle3}. 
Letting $V '$ and $V ''$ be respectively the complements  in  $S$ of the unbounded connected components of 
$S \setminus \cup _{D_i \in \mathcal F'}  D _i$ and $S \setminus \cup _{D_i \in \mathcal F''}  D _i$, 
by Lemma \ref{p:triangle2} and induction, we have 
$$|V  ' |  \geq (j-2)|\t_{r_*} |+|\wt_{r_*}|  \qquad \text { and } \qquad |V  '' |  \geq (m-j -1) |\t_{r_*}|+ |\wt_{r_*}|+ |\wwt _{r_*}| \,.$$ 
Since by construction $V ' \cap  V '' = \emptyset$, and 
$V = V ' \cup  V ''$, we obtain
 $$|V| \geq (m-3) |\t_{r_*}| + 2 |\wt_{r_*}| +  |\wwt _{r_*}|
  \geq (m-2) |\t_{r_*}| +  |\wt_{r_*}| +  |\wwt _{r_*}| \, . $$ 

\smallskip
{\it Case 1b}: The family $\{D_1, \dots D_k\}$ can be decomposed as the union of 
two disjoint subfamilies $\mathcal F'$ and $\mathcal F ''$, of cardinalities $j$ and $m+2-j$ (both strictly smaller than $m$), such that one of them, say $\mathcal F'$, satisfies the assumptions of Lemma \ref{p:triangle1}, and the other one, say $\mathcal F''$ satisfies the assumptions of Lemma \ref{p:triangle3}. 
Letting $V '$ and $V ''$ be respectively the complements  in  $S$ of the unbounded connected components of 
$S \setminus \cup _{D_i \in \mathcal F'}  D _i$ and $S\setminus \cup _{D_i \in \mathcal F''}  D _i$,  
by Lemma \ref{p:triangle1} and induction, we have 
$$|V  ' |  \geq (j-2) |\t_{r_*} |   \qquad \text { and } \qquad |V  '' |  \geq (m-j ) |\t_{r_*}|+ |\wt_{r_*}|+ |\wwt _{r_*}| \,.$$ 
Since by construction $V ' \cap  V '' = \emptyset$, and 
$V = V ' \cup  V ''$, we obtain 
 $$|V|
  \geq (m-2) |\t_{r_*}| +  |\wt_{r_*}| +  |\wwt _{r_*}| \, . $$

The proof of the induction step in Case 1 is concluded. 

\smallskip 
{\it Case 2}:   no disk $D_j$, with $j \neq 1, m$, is tangent to a half-line which delimits $S$  (equivalently, 
$V$ is connected).
In this case, we  observe that the equality \eqref{equalradii} must be satisfied, with the usual proof. 
 Then, we 
consider a sector $\widetilde S$ of opening angle $\pi/3$ delimited by two half-lines: one of them is the same tangent to $D _1$ which delimits the original sector  $S$, and the other one is  tangent to $D_{m-1}$. We set  $V ' := V  \cap \widetilde S$ and $V '' := V  \cap (\R ^2 \setminus \widetilde S)$, see Figure \ref{fig:c}, right. By induction, we have $|V '| \geq (m-3) |\t _{r_*}| + |\wt _{r_*}|  + |\wwt _{r_*}|$. Moreover, $V ''$ contains a copy of $\t_{r_*}$ (with strict inclusion, since we are dealing with case 2); hence we have $|V '| \geq |\t _{r_*}| $.   Since by construction $V ' \cap  V '' = \emptyset$, and 
$V = V ' \cup  V ''$, we obtain that $|V|\geq (m-2) |\t _{r_*}| + |\t _{r_*}|  + |\wwt _{r_*}|$, 
concluding the proof of the induction step also in Case 2.

 \qed

\bigskip
\def\cprime{$'$}
\providecommand{\bysame}{\leavevmode\hbox to3em{\hrulefill}\thinspace}
\providecommand{\MR}{\relax\ifhmode\unskip\space\fi MR }
\providecommand{\MRhref}[2]{%
  \href{http://www.ams.org/mathscinet-getitem?mr=#1}{#2}
}
\providecommand{\href}[2]{#2}


\begin{thebibliography}{10}

\bibitem{ACMM01}
{L.} Ambrosio, {V.} Caselles, {S.} Masnou, and {J.M.} Morel, \emph{Connected
  components of sets of finite perimeter and applications to image processing},
  J. Eur. Math. Soc. (JEMS) \textbf{3} (2001), no.~1, 39--92.

\bibitem{bf17R}
{D.} Bucur and {I.} Fragal{\`a}, \emph{On the honeycomb conjecture for {R}obin
  {L}aplacian eigenvalues}, preprint CVGMT (2017).

\bibitem{bfvv17}
{D.} Bucur, {I.} Fragal\`a, {B.} Velichkov, and {G.} Verzini, \emph{On the
  honeycomb conjecture for a class of minimal convex partitions}, Arxiv
  Preprint, arXiv:1703.05383 (2017).

\bibitem{bhim96}
{K.} Burdzy, {R.} Holyst, {D.} Ingerman, and {P.} March, \emph{Configurational
  transition in a Fleming - Viot-type model and probabilistic interpretation of
  Laplacian eigenfunctions}, Journal of Physics A: Mathematical and General
  \textbf{29} (1996), no.~11, 2633.

\bibitem{CaffLin}
{L. A.} Caffarelli and {F. H.} Lin, \emph{An optimal partition problem for
  eigenvalues}, J. Sci. Comput. \textbf{31} (2007), no.~1-2, 5--18.

\bibitem{Car17}
M.~Caroccia, \emph{Cheeger {N}-clusters}, Calc. Var. Partial Differential
  Equations \textbf{56} (2017), no.~2, 56:30. \MR{3610172}

\bibitem{Ch}
{J.} Cheeger, \emph{A lower bound for the smallest eigenvalue of the
  {L}aplacian}, Problems in analysis ({P}apers dedicated to {S}alomon
  {B}ochner, 1969), Princeton Univ. Press, Princeton, N. J., 1970,
  pp.~195--199.

\bibitem{cbh05}
O.~Cybulski, V.~Babin, and R.~Holyst, \emph{Minimization of the {R}enyi entropy
  production in the space-partitioning process}, Phys. Rev. E (3) \textbf{71}
  (2005), no.~4, 046130, 10. \MR{2139992}

\bibitem{FT64}
L.~Fejes~T{\'o}th, Regular figures, A Pergamon Press Book, The Macmillan Co.,
  New York, 1964.

\bibitem{Hales}
{T. C.} Hales, \emph{The honeycomb conjecture}, Discrete Comput. Geom.
  \textbf{25} (2001), no.~1, 1--22.

\bibitem{HeHoTe}
{B.} Helffer, {T.} Hoffmann-Ostenhof, and {S.} Terracini, \emph{Nodal domains
  and spectral minimal partitions}, Ann. Inst. H. Poincar\'e Anal. Non
  Lin\'eaire \textbf{26} (2009), no.~1, 101--138.

\bibitem{Ivrii}
{V.} Ivrii, \emph{100 years of {W}eyl's law}, Bull. Math. Sci. \textbf{6}
  (2016), no.~3, 379--452.

\bibitem{KaLR}
{B.} Kawohl and {T.} Lachand-Robert, \emph{Characterization of {C}heeger sets
  for convex subsets of the plane}, Pacific J. Math. \textbf{225} (2006),
  no.~1, 103--118.

\bibitem{Trev}
{J. R.} Lee, {S. O.} Gharan, and {L.} Trevisan, \emph{Multiway spectral
  partitioning and higher-order {C}heeger inequalities}, J. ACM \textbf{61}
  (2014), no.~6, Art. 37, 30.

\bibitem{Leo}
{G. P.} Leonardi, \emph{An overview on the Cheeger problem}, Pratelli, {A.},
  Leugering, {G.} (eds.) New trends in shape optimization., International
  Series of Numerical Mathematics, Springer (Switzerland), vol. 166, 2016,
  pp.~117--139.

\bibitem{lns}
{G. P.} Leonardi, {R.} Neumayer, and {G.} Saracco, \emph{The Cheeger constant
  of a Jordan domain without necks}, Arxiv Preprint, arXiv:1704.07253 (2017).

\bibitem{Pa}
{E.} Parini, \emph{An introduction to the {C}heeger problem}, Surv. Math. Appl.
  \textbf{6} (2011), 9--21.

\bibitem{PaBo}
{E.} Parini and {V.} Bobkov, \emph{On the higher Cheeger problem}, Arxiv
  Preprint, arXiv:1706.07282 (2017).

\end{thebibliography}
\end{document}